\newcommand*{\Fcomma}{{\normalcomma}\mskip\Fmuskip}
\newcommand*{\F}[4][8]{%
  \begingroup%
  \Fmuskip=#1mu\relax%
  \mathchardef\normalcomma=\mathcode`,%
  \mathcode`\,=\string"8000%
  \begingroup\lccode`\~=`\,%
  \lowercase{\endgroup\let~}\Fcomma%
  {}_{2}F_{1}{\left[\genfrac..{0pt}{}{#2}{#3};#4\right]}%
  \endgroup%
}
\DeclareMathOperator{\sign}{sgn}
\newcolumntype{d}[1]{D{.}{.}{#1}}
\renewcommand\thealgorithm{\arabic{algorithm}}
\algnewcommand{\Input}{\item[\textbf{Input:}]}
\algnewcommand{\Output}{\item[\textbf{Output:}]}
\algnewcommand{\Initialization}[1]{\item[\textbf{Initialization:}]\Statex \parbox[t]{0.8\linewidth}{\raggedright #1}}
\algnewcommand{\LineComment}[1]{\hfill #1}
\begin{document}

\renewcommand*{\thefootnote}{\fnsymbol{footnote}}

\begin{flushleft}
{\Large\textbf{Numerical evaluation of the transition probability of the simple birth-and-death process}}
\newline
\\
Alberto Pessia\({}^{1,}\)\footnote{To whom correspondence should be addressed. Email: academic@albertopessia.com} and Jing Tang\({}^{1}\)
\\
\small{\({}^{1}\)Faculty of Medicine, University of Helsinki, Helsinki, Finland}
\end{flushleft}

\renewcommand*{\thefootnote}{\arabic{footnote}}
\setcounter{footnote}{0}

\begin{abstract}
The simple (linear) birth-and-death process is a widely used stochastic model for describing the dynamics of a population.
When the process is observed discretely over time, despite the large amount of literature on the subject, little is known about formal estimator properties.
Here we will show that its application to observed data is further complicated by the fact that numerical evaluation of the well-known transition probability is an ill-conditioned problem.
To overcome this difficulty we will rewrite the transition probability in terms of a Gaussian hypergeometric function and subsequently obtain a three-term recurrence relation for its accurate evaluation.
We will also study the properties of the hypergeometric function as a solution to the three-term recurrence relation.
We will then provide formulas for the gradient and Hessian of the log-likelihood function and conclude the article by applying our methods for numerically computing maximum likelihood estimates in both simulated and real dataset.
\end{abstract}

\section{Introduction}\label{sec:introduction}
A birth-and-death process (BDP) is a stochastic model that is commonly employed for describing changes over time of the size of a population.
Its first mathematical formulation is due to \citet{feller1939ab_grundlagen}, followed by important mathematical contributions of \citet{arley1944am_theory} and \citet{kendall1948ams_generalized, kendall1949jrsssbsm_stochastic}.
According to the basic assumptions of the model, when the population size at time \(t\) is \(j\), the probability of a single birth occurring during an infinitesimal time interval \((t, t + dt)\) is equal to \(\lambda_{j} dt + o(dt)\) while the probability of a single death is \(\mu_{j} dt + o(dt)\), where \(\lambda_{j} \geq 0\) and \(\mu_{j} \geq 0\) for \(j \geq 0\).
If \(p_{j}(t)\) is the probability of observing \(j\) individuals at time \(t\) then
\begin{equation*}
  p_{j}(t + dt) = \lambda_{j - 1} dt p_{j - 1}(t) + \mu_{j + 1} dt p_{j + 1}(t) + (1 - (\lambda_{j} + \mu_{j}) dt) p_{j}(t) + o(dt)
\end{equation*}
If we subtract \(p_{j}(t)\) from both sides of the equation, divide by \(dt\), and then take the limit of \(dt\) to zero, we obtain the well known BDP differential equation
\begin{equation}\label{eq:prob_bdp_1}
  p_{j}^{\prime}(t) = \lambda_{j - 1} p_{j - 1}(t) + \mu_{j + 1} p_{j + 1}(t) - (\lambda_{j} + \mu_{j}) p_{j}(t)
\end{equation}
By assuming that at time zero the size of the population was \(i \geq 0\), that is \(p_{i}(0) = 1\), we have the initial condition required to solve the differential equation (\ref{eq:prob_bdp_1}).

In this article we will focus on the simple (linear) BDP without migration \citep{kendall1949jrsssbsm_stochastic} defined by \(\lambda_{j} = j\lambda\) and \(\mu_{j} = j\mu\).
With this particular choice of parameters a starting size of zero implies \(\lambda_{0} = \mu_{0} = 0\), i.e. it remains zero for all \(t \geq 0\).
The rate of growth does not increase faster than the population size and therefore \(\sum_{j = 0}^{\infty} p_{j}(t) = 1\) \citep[Chapter 17, Section 4]{feller1968_introduction}.
When \(i > 0\) the population becomes extinct if it reaches the size \(j = 0\) at time \(t > 0\).
Obviously \(i\), \(j\), and \(t\) are not allowed to be negative, nor the basic birth and death rates.

What makes this model particularly attractive for real applications is the fact that its transition probability is available in closed form \citep[Chapter 8]{bailey1964_elements} and we could, in principle, easily evaluate it.
By defining
\begin{equation*}
  \begin{split}
    &\phi(t, \lambda, \mu) = \frac{e^{(\lambda - \mu) t} - 1}{\lambda e^{(\lambda - \mu) t} - \mu}, \qquad \alpha(t, \lambda, \mu) = \mu \phi(t, \lambda, \mu), \qquad \beta(t, \lambda, \mu) = \lambda \phi(t, \lambda, \mu)\\
    &\gamma(t, \lambda, \mu) = 1 - \alpha(t, \lambda, \mu) - \beta(t, \lambda, \mu) = 1 - (\lambda + \mu) \phi(t, \lambda, \mu)
  \end{split}
\end{equation*}
and assuming that at time 0 the size of the population was \(i > 0\), the probability of observing \(j\) units at time \(t\) is
\begin{empheq}[left={p_{j}(t)=\empheqlbrace}]{align}
  &\sum_{h = 0}^{m} \binom{i}{h} \binom{i + j - h - 1}{i - 1} \alpha(t, \lambda, \mu)^{i - h} \beta(t, \lambda, \mu)^{j - h} \gamma(t, \lambda, \mu)^{h}, &\mu \neq \lambda\label{eq:transprob_2}\\
  &\sum_{h = 0}^{m} \binom{i}{h} \binom{i + j - h - 1}{i - 1} \left(\frac{\lambda t}{1 + \lambda t}\right)^{i + j - 2 h} \left(\frac{1 - \lambda t}{1 + \lambda t}\right)^{h}, &\mu = \lambda\label{eq:transprob_3}\\
  &\binom{j - 1}{i - 1} e^{-j \lambda t} (e^{\lambda t} - 1)^{j - i}, &\mu = 0, j \geq i\label{eq:transprob_4}\\
  &\binom{i}{j} e^{-i \mu t} (e^{\mu t} - 1)^{i - j}, &\lambda = 0, j \leq i\label{eq:transprob_5}\\
  &1, &t = 0, j = i\label{eq:transprob_6}\\
  &1, &\lambda = \mu = 0, j = i\label{eq:transprob_7}\\
  &0, &\text{otherwise}\label{eq:transprob_8}
\end{empheq}
where \(t\), \(\lambda\), and \(\mu\) are to be considered strictly positive if not otherwise specified and \(m = \min(i, j)\).
The probability of the population being extinct at time \(t\) is
\begin{empheq}[left={p_{0}(t)=\empheqbiglbrace}]{align}
  &\left(\frac{\mu e^{(\lambda - \mu) t} - \mu}{\lambda e^{(\lambda - \mu) t} - \mu}\right)^{i}, &\mu \neq \lambda\label{eq:transprob_9}\\
  &\left(\frac{\lambda t}{1 + \lambda t}\right)^{i}, &\mu = \lambda\label{eq:transprob_10}\\
  &(1 - e^{-\mu t})^{i}, &\lambda = 0\label{eq:transprob_11}\\
  &0, &\mu = 0 \text{ or } t = 0\label{eq:transprob_12}
\end{empheq}

In the majority of applications direct evaluation of equations (\ref{eq:transprob_2})-(\ref{eq:transprob_12}) is sufficient.
However, for particular values of process parameters, equations (\ref{eq:transprob_2}) and (\ref{eq:transprob_3}) are numerically unstable (Figure \ref{fig:relative_error}) and alternative methods are needed.
\begin{figure}[t]
  \centering
  \includegraphics[width=\textwidth]{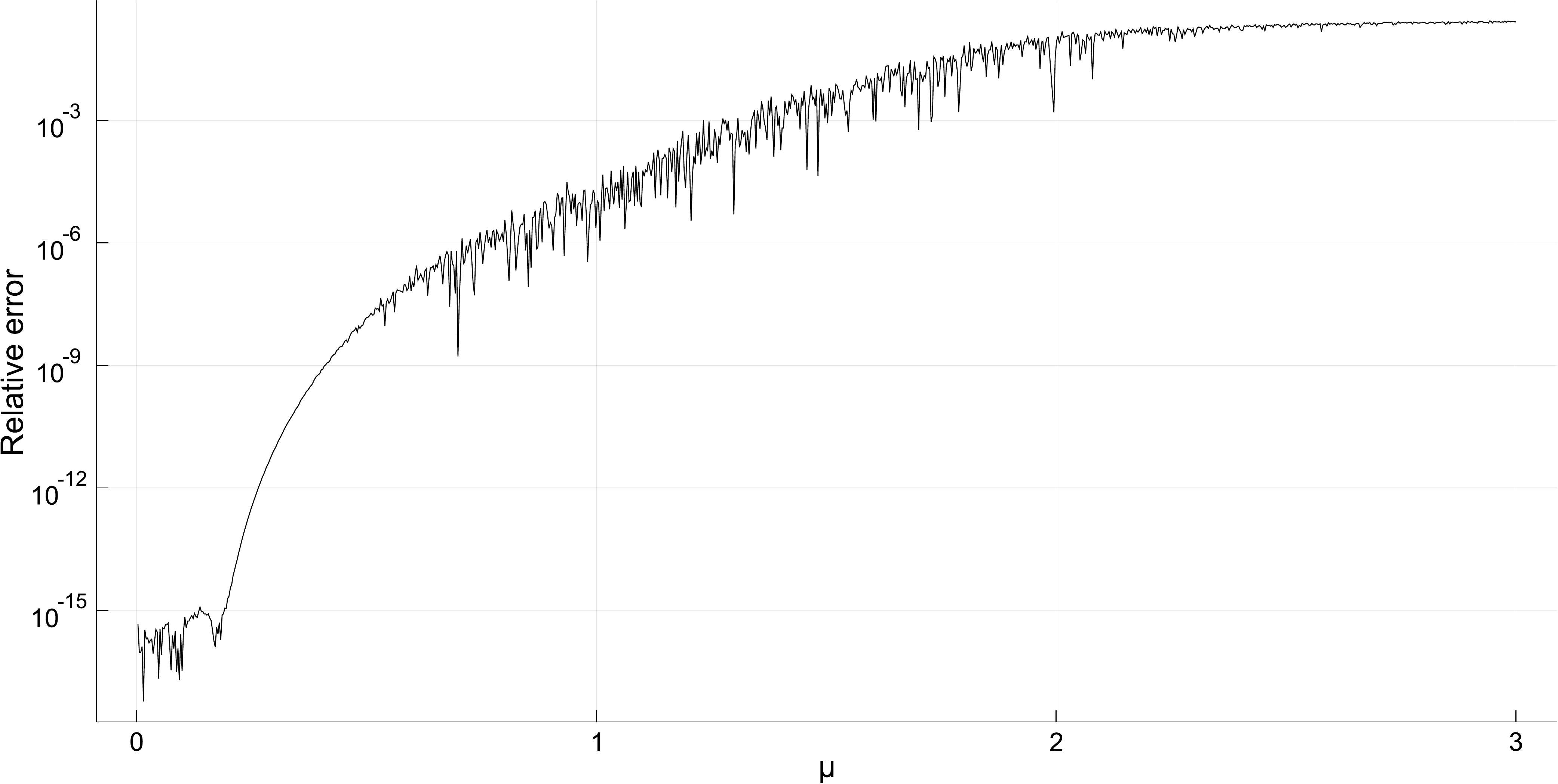}
  \caption{Numerical relative error of the log-probability as evaluated by direct application of equations (\ref{eq:transprob_2}) and (\ref{eq:transprob_3}). For this particular example we set \(i = 25\), \(j = 35\), \(t = 2\), \(\lambda = 1\) and evaluated the log-probability as a function of \(\mu\). We computed correct values in arbitrary precision with Maple\texttrademark{} 2018.2 \citep{monagan2005_maple}. Relative error is defined as \(|1 - \log \hat{p}_{j}(t) / \log p_{j}(t)|\) where \(\hat{p}_{j}(t)\) is the numerically evaluated transition probability.}\label{fig:relative_error}
\end{figure}
A possible approach could be the algorithm introduced by \citet{crawford2012jmb_transition} based on the continued fraction representation of \citet{murphy1975ijam_properties}, but for this particular case where we know the exact closed form we will show that a simpler and more efficient method is available.

The remainder of the paper is organized as follows.
In Section \ref{sec:numer_stabil} we introduce the problem and find the parameter sets for which it is ill-conditioned.
In Section \ref{sec:hyper_repr} we rewrite the transition probability in terms of a Gaussian hypergeometric function and find in Section \ref{sec:hyper_func} a three-term recurrence relation (TTRR) for its computation.
In Section \ref{sec:likelihood} we extend the results to the log-likelihood function, its gradient, and its Hessian matrix.
In Section \ref{sec:applications} we apply our method to both simulated and real data.
In Section \ref{sec:conclusions} we conclude the article with a brief discussion.

\section{Numerical stability}\label{sec:numer_stabil}
We will always assume that all basic mathematical operations (arithmetic, logarithmic, exponentiation, etc.) are computed with a relative error bounded by a value \(\epsilon\) that is close to zero and small enough for any practical application \citep{press2007_numerical}.
Following this assumption and after taking into consideration floating-point arithmetic \citep{goldberg1991acs_what}, equations (\ref{eq:transprob_4})-(\ref{eq:transprob_12}) can be considered numerically stable and won't be discussed further.
We will instead focus our attention on the series (\ref{eq:transprob_2}) and (\ref{eq:transprob_3}) assuming all variables to be strictly positive, including \(j\).

Suppose to be interested in the value \(s_{m} = \sum_{h=0}^{m} u_{h}\) and use a na{\"i}ve recursive summation algorithm for its computation, that is
\begin{equation*}
  \begin{split}
    s_{0} &= u_{0}\\
    s_{n} &= s_{n - 1} + u_{n}, \quad n = 1, \ldots, m
  \end{split}
\end{equation*}
The relative condition number of this algorithm is \citep{stummel1980fonc(na_rounding}
\begin{equation*}
  \rho_{m} = \rho_{m}^{A} + \rho_{m}^{R} = \frac{\sum\limits_{h=0}^{m} | u_{h} |}{\left|\sum\limits_{h=0}^{m} u_{h}\right|} + \frac{\sum\limits_{n=1}^{m} | s_{n} |}{\left|\sum\limits_{h=0}^{m} u_{h}\right|}
\end{equation*}
where \(\rho_{m}^{A}\) is associated with perturbations in the the value of the addends while \(\rho_{m}^{R}\) is associated with rounding errors in the arithmetic operations.
Note that when \(u_{h} \geq 0\) for all \(h\), \(\rho_{m}^{A} = 1\) and the condition number depends only on rounding errors.
With a compensated summation algorithm \citep{higham2002_accuracy} we might significantly reduce the numerical error and evaluate accurately the sum.
However, when the addends are alternating in sign, the condition number can be of large magnitude and the problem is numerically unstable even when compensating for rounding errors.
In our case it is likely that the magnitude of the binomial coefficients make \(\rho_{m}^{A}\) a ratio between a very large number and a probability that is instead close to zero.
We will now find the conditions under which the sums (\ref{eq:transprob_2}) and (\ref{eq:transprob_3}) are alternating in sign.

\begin{proposition}\label{prop:phi_fun}
For all \(\lambda \geq 0\) and \(\mu \geq 0\) the function
\begin{equation*}
  \phi(t, \lambda, \mu) = \frac{e^{(\lambda - \mu) t} - 1}{\lambda e^{(\lambda - \mu) t} - \mu}
\end{equation*}
is zero if and only if \(t = 0\). It is always positive otherwise.
\end{proposition}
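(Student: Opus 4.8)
The plan is to reduce the claim to a sign analysis of the numerator and denominator separately, after first disposing of the apparent $0/0$ singularity along the line $\lambda = \mu$. I would write $x = (\lambda - \mu)t$, so that for $t > 0$ the sign of $x$ agrees with the sign of $\lambda - \mu$. The key elementary fact is that $e^{x} - 1$ has the same sign as $x$ (positive for $x > 0$, negative for $x < 0$, zero for $x = 0$), which controls the numerator immediately.

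For the denominator $\lambda e^{x} - \mu$ I would argue that it carries the same sign as $\lambda - \mu$ whenever $t > 0$ and $\lambda \neq \mu$. Indeed, if $\lambda > \mu$ then $x > 0$, hence $e^{x} > 1$ and $\lambda e^{x} > \lambda > \mu$, so the denominator is positive; if $\lambda < \mu$ then $x < 0$, hence $e^{x} < 1$ and $\lambda e^{x} \leq \lambda < \mu$ (using $\lambda \geq 0$), so the denominator is negative. Since numerator and denominator then share the sign of $\lambda - \mu$, their quotient $\phi$ is strictly positive. This single comparison also covers the boundary cases $\lambda = 0$ and $\mu = 0$, which is why I would keep $\lambda \geq 0$ rather than $\lambda > 0$ throughout.

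The remaining case $\lambda = \mu$ makes the defining formula indeterminate, so I would treat $\phi$ there as its continuous extension. Expanding $e^{(\lambda - \mu)t} = 1 + (\lambda - \mu)t + O((\lambda - \mu)^{2})$ in both numerator and denominator and letting $\lambda \to \mu$ yields $\phi = t/(1 + \mu t)$, which is nonnegative and vanishes exactly at $t = 0$. This is consistent with the separate $\mu = \lambda$ branch of the transition probability, since $\beta = \lambda\phi = \lambda t/(1 + \lambda t)$ reproduces the factor appearing in equation (\ref{eq:transprob_3}).

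Finally I would assemble the ``zero if and only if'' statement. For $t = 0$ the numerator $e^{0} - 1$ vanishes while the denominator equals $\lambda - \mu$; when $\lambda \neq \mu$ this gives $\phi = 0$ directly, and the case $\lambda = \mu$ gives $\phi = 0$ from the extension above, so $t = 0$ implies $\phi = 0$. Conversely, the positivity established for every $t > 0$ shows $\phi \neq 0$ there, so $\phi = 0$ forces $t = 0$. I expect the only genuine subtlety to be the line $\lambda = \mu$, where the literal formula is $0/0$ and one must pass to the limit; the sign analysis itself is routine once the comparison of $\lambda e^{x}$ against $\mu$ has been made.
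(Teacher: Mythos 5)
Your proposal is correct and follows essentially the same route as the paper: a separate sign analysis of numerator and denominator for \(t > 0\) in the two cases \(\lambda > \mu\) and \(\lambda < \mu\), the continuous extension \(t/(1 + \lambda t)\) on the line \(\lambda = \mu\), and the direct check that \(t = 0\) kills the numerator but not the denominator. The only (minor) difference is that you compare \(\lambda e^{x}\) with \(\mu\) directly instead of comparing \(e^{(\lambda-\mu)t}\) with \(\mu/\lambda\) as the paper does, which avoids dividing by \(\lambda\) and so handles the boundary case \(\lambda = 0\), \(\mu > 0\) slightly more cleanly.
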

\begin{proof}
If \(\mu \neq \lambda\) and \(t = 0\) the numerator \(e^{(\lambda - \mu) t} - 1\) is equal to zero but the denominator is not.
When \(\mu = \lambda\) the function becomes
\begin{equation*}
  \lim_{\mu \rightarrow \lambda} \frac{e^{(\lambda - \mu) t} - 1}{\lambda e^{(\lambda - \mu) t} - \mu} = \frac{t}{1 + \lambda t}
\end{equation*}
For all \(\lambda \geq 0\), it is equal to zero if \(t = 0\) and positive otherwise.
Assume now \(t > 0\).
When \(\lambda > \mu\) we have \(e^{(\lambda - \mu) t} > 1\) and \(\mu / \lambda < 1\) implying that the numerator and the denominator are always positive.
When \(\lambda < \mu\) the numerator \(e^{(\lambda - \mu) t} - 1\) is negative.
The denominator is also negative when \(e^{(\lambda - \mu) t} < \mu /\lambda\).
Since \(\lambda < \mu\) the left hand side is less than one while the right hand side is greater than one, proving the proposition.
\end{proof}

\begin{corollary}\label{corol:nonneg}
Functions \(\alpha(t, \lambda, \mu) = \mu \phi(t, \lambda, \mu)\) and \(\beta(t, \lambda, \mu) = \lambda \phi(t, \lambda, \mu)\) are non-negative for all \(\lambda \geq 0\), \(\mu \geq 0\), and \(t \geq 0\).
\end{corollary}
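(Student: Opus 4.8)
The plan is to deduce this immediately from Proposition \ref{prop:phi_fun}. That proposition already establishes that \(\phi(t, \lambda, \mu)\) is zero when \(t = 0\) and strictly positive otherwise, so in particular \(\phi(t, \lambda, \mu) \geq 0\) throughout the region \(\lambda \geq 0\), \(\mu \geq 0\), \(t \geq 0\). The non-negativity of \(\alpha\) and \(\beta\) should then follow by observing that each is a product of a non-negative multiplier (\(\mu\) and \(\lambda\) respectively) with the non-negative factor \(\phi\).

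Concretely, I would first invoke Proposition \ref{prop:phi_fun} to record that \(\phi(t, \lambda, \mu) \geq 0\) on the stated domain. Next, since \(\alpha(t, \lambda, \mu) = \mu\,\phi(t, \lambda, \mu)\) is a product of two non-negative numbers whenever \(\mu \geq 0\), I would conclude \(\alpha(t, \lambda, \mu) \geq 0\). The identical argument applied to \(\beta(t, \lambda, \mu) = \lambda\,\phi(t, \lambda, \mu)\) with \(\lambda \geq 0\) gives \(\beta(t, \lambda, \mu) \geq 0\), completing the claim.

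I do not anticipate any genuine obstacle here: the corollary is an essentially one-line consequence of the preceding proposition, and no additional case analysis or estimation is required beyond noting that a product of non-negative factors is non-negative. The only point worth a moment's care is confirming that the domain of the corollary (\(\lambda, \mu, t \geq 0\)) coincides with the domain on which Proposition \ref{prop:phi_fun} guarantees \(\phi \geq 0\), which it does, so the multipliers \(\lambda\) and \(\mu\) are indeed non-negative exactly where \(\phi\) is.
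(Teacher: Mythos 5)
Your proposal is correct and is exactly the paper's own argument: the paper's proof likewise states that the corollary is a direct consequence of Proposition \ref{prop:phi_fun} together with the assumptions \(\lambda \geq 0\) and \(\mu \geq 0\). You have merely spelled out the one-line product-of-non-negatives observation that the paper leaves implicit.
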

\begin{proof}
This is a direct consequence of Proposition (\ref{prop:phi_fun}) and the assumptions \(\lambda \geq 0\) and \(\mu \geq 0\).
\end{proof}

\begin{proposition}\label{prop:gamma_fun}
Assume \(t > 0\), \(\lambda > 0\), and \(\mu > 0\).
Let \(\gamma(t, \lambda, \mu) = 1 - (\lambda + \mu) \phi(t, \lambda, \mu)\).
If \(\mu \neq \lambda\), define \(\xi = \log(\lambda / \mu) / (\lambda - \mu)\).
If \(\mu = \lambda\), define instead \(\xi = 1 / \lambda\).
Then
\begin{empheq}[left={\gamma(t, \lambda, \mu) \empheqbiglbrace}]{align*}
  &< 0 \text{ if } t > \xi\\
  &> 0 \text{ if } t < \xi\\
  &= 0 \text{ if } t = \xi
\end{empheq}
\end{proposition}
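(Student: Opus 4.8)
The plan is to collapse $\gamma$ into a single rational expression whose sign can be read off directly. First, treating the case $\mu \neq \lambda$ and writing $x = (\lambda - \mu)t$ for brevity, I would place the two terms of $\gamma = 1 - (\lambda+\mu)\phi$ over the common denominator $\lambda e^{x} - \mu$. A short computation, in which the cross terms $\lambda e^{x}$ and $-\lambda e^{x}$ in the numerator cancel, gives
\[
  \gamma(t,\lambda,\mu) = \frac{\lambda - \mu e^{x}}{\lambda e^{x} - \mu}.
\]
This isolates the factor $\lambda - \mu e^{(\lambda-\mu)t}$, whose vanishing I expect to pin down the threshold $t = \xi$.

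Next I would fix the sign of the denominator using Proposition \ref{prop:phi_fun}. Its proof shows that $\lambda e^{x} - \mu$ (the denominator of $\phi$) is positive exactly when $\lambda > \mu$ and negative exactly when $\lambda < \mu$, since it shares the sign of the numerator $e^{x} - 1$. Hence for $t > 0$ the denominator never vanishes and keeps a constant sign in $t$, so the sign of $\gamma$ is determined by the numerator $N(t) = \lambda - \mu e^{(\lambda-\mu)t}$ up to this fixed factor.

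The third step is to analyse $N(t)$. Solving $N(t) = 0$ gives $e^{(\lambda-\mu)t} = \lambda/\mu$, hence $t = \log(\lambda/\mu)/(\lambda-\mu) = \xi$; since $\log(\lambda/\mu)$ and $\lambda - \mu$ always carry the same sign, $\xi > 0$ in both subcases. Because $N$ is strictly monotone in $t$ — decreasing when $\lambda > \mu$ and increasing when $\lambda < \mu$ — I would compare $N(t)$ with $N(\xi) = 0$ and multiply by the constant sign of the denominator. The main obstacle is precisely this bookkeeping: both the direction of monotonicity of $N$ and the sign of the denominator flip between $\lambda > \mu$ and $\lambda < \mu$, so I must check that these two sign reversals compensate, yielding the uniform conclusion $\gamma > 0$ for $t < \xi$, $\gamma = 0$ for $t = \xi$, and $\gamma < 0$ for $t > \xi$, rather than inadvertently inverting one subcase.

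Finally, I would dispatch the boundary case $\mu = \lambda$ using the limiting form $\phi = t/(1 + \lambda t)$ recorded in the proof of Proposition \ref{prop:phi_fun}, which gives $\gamma = (1 - \lambda t)/(1 + \lambda t)$. With $\xi = 1/\lambda$ and the denominator $1 + \lambda t$ positive, the three sign cases follow at once from the sign of $1 - \lambda t$, completing the proof.
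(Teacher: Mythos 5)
Your proposal is correct and follows essentially the same route as the paper's proof: rewrite \(\gamma\) as the single rational expression \((\lambda - \mu e^{(\lambda-\mu)t})/(\lambda e^{(\lambda-\mu)t} - \mu)\), use Proposition \ref{prop:phi_fun} to fix the sign of the denominator in the two subcases \(\lambda > \mu\) and \(\lambda < \mu\), determine where the numerator changes sign (at \(t = \xi\)), and handle \(\mu = \lambda\) via the limiting form \((1-\lambda t)/(1+\lambda t)\). The only cosmetic difference is that you argue the numerator's sign via strict monotonicity of \(N(t)\) compared against \(N(\xi)=0\), whereas the paper reads it off directly from the inequality \((\lambda-\mu)t < \log(\lambda/\mu)\); the case bookkeeping is identical and your sign reversals do compensate as claimed.
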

\begin{proof}
Rewrite the function \(\gamma(t, \lambda, \mu)\) in its expanded form:
\begin{equation*}
  \gamma(t, \lambda, \mu) = \frac{\lambda - \mu e^{(\lambda - \mu) t}}{\lambda e^{(\lambda - \mu) t} - \mu}
\end{equation*}
Assume \(\mu \neq \lambda\).
If \(t > 0\), we already proved in Proposition (\ref{prop:phi_fun}) that the denominator is always positive when \(\lambda > \mu\) while it is always negative when \(\lambda < \mu\).
The numerator is positive when \((\lambda - \mu) t < \log(\lambda / \mu)\), that is when \(t < \xi\) and \(\lambda > \mu\) or \(t > \xi\) and \(\lambda < \mu\).
It is zero when \(t = \xi\).
From these results follow the first set of inequalities.
When \(\mu = \lambda\) the function becomes
\begin{equation*}
  \lim_{\mu \rightarrow \lambda} \frac{\lambda - \mu e^{(\lambda - \mu) t}}{\lambda e^{(\lambda - \mu) t} - \mu} = \frac{1 - \lambda t}{1 + \lambda t}
\end{equation*}
If \(t > 0\) and \(\lambda > 0\) the denominator is always positive.
The numerator is zero if \(t = \lambda^{-1}\), it is positive when \(t < \lambda^{-1}\), and it is negative otherwise.
\end{proof}

\begin{corollary}\label{corol:zeroterm}
When \(t = \log(\lambda / \mu) / (\lambda - \mu)\) equation (\ref{eq:transprob_2}) becomes
\begin{equation*}
  p_{j}(t) = \binom{i + j - 1}{i - 1} \left(\frac{\mu}{\lambda + \mu}\right)^{i} \left(\frac{\lambda}{\lambda + \mu}\right)^{j}
\end{equation*}
When \(t = \lambda^{-1}\) equation (\ref{eq:transprob_3}) becomes
\begin{equation*}
  p_{j}(t) = \binom{i + j - 1}{i - 1} \left(\frac{1}{2}\right)^{i + j}
\end{equation*}
\end{corollary}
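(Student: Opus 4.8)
The plan is to exploit the fact, established in Proposition \ref{prop:gamma_fun}, that the prescribed value of $t$ is precisely the zero of the relevant sign-determining factor. The key observation is that when such a factor raised to the power $h$ vanishes, every term of the series with $h \geq 1$ is annihilated, while the $h = 0$ term is unaffected because $0^0 = 1$. Consequently the entire sum collapses to its single leading term, and all that remains is to evaluate that term explicitly at the given $t$.

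First I would treat the case $\mu \neq \lambda$ and equation (\ref{eq:transprob_2}). By Proposition \ref{prop:gamma_fun}, setting $t = \xi = \log(\lambda / \mu) / (\lambda - \mu)$ makes $\gamma(t, \lambda, \mu) = 0$. Since $\gamma$ enters the sum only through the factor $\gamma^{h}$, every summand with $h \geq 1$ vanishes and only the $h = 0$ term survives, leaving $p_{j}(t) = \binom{i + j - 1}{i - 1} \alpha^{i} \beta^{j}$. It then remains to compute $\alpha$ and $\beta$ at $t = \xi$. Substituting $e^{(\lambda - \mu) t} = \lambda / \mu$ into the definition of $\phi$ and simplifying gives $\phi = 1 / (\lambda + \mu)$, whence $\alpha = \mu / (\lambda + \mu)$ and $\beta = \lambda / (\lambda + \mu)$. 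Inserting these expressions yields the first claimed identity.

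The case $\mu = \lambda$ and equation (\ref{eq:transprob_3}) proceeds identically. Setting $t = \lambda^{-1}$ gives $\lambda t = 1$, so the factor $(1 - \lambda t) / (1 + \lambda t)$ equals zero; again only the $h = 0$ term survives, and evaluating $\lambda t / (1 + \lambda t) = 1/2$ produces the second identity. The only step requiring any care is the algebraic reduction of $\phi$ at $t = \xi$, but because the exponential takes the clean value $\lambda / \mu$ there, the common factor $(\lambda - \mu)$ cancels between numerator and denominator, so no genuine obstacle arises and the calculation is entirely routine.
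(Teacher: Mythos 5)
Your proposal is correct and follows exactly the paper's own argument: Proposition \ref{prop:gamma_fun} gives \(\gamma(t, \lambda, \mu) = 0\) at the prescribed \(t\), so every term with \(h \geq 1\) vanishes (since \(0^{h} = 0\) for \(h > 0\) while \(0^{0} = 1\)) and only the \(h = 0\) term survives. Your explicit evaluation of \(\phi = 1/(\lambda + \mu)\), hence \(\alpha = \mu/(\lambda + \mu)\) and \(\beta = \lambda/(\lambda + \mu)\), just spells out the routine substitution the paper leaves implicit.
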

\begin{proof}
This is a direct consequence of the fact that \(\gamma(t, \lambda, \mu) = 0\) and that \(0^{h}\) is zero for all \(h > 0\) and one for \(h = 0\).
\end{proof}

From Corollary \ref{corol:zeroterm} we have simple closed form solutions when \(\gamma(t, \lambda, \mu)\) is zero and therefore we will not consider this case further.
We know from Proposition \ref{prop:gamma_fun} the conditions under which equations (\ref{eq:transprob_2}) and (\ref{eq:transprob_3}) are alternating in sign and might become numerically unstable.
Looking at the example shown in Figure \ref{fig:relative_error}, where \(t = 2\) and \(\lambda = 1\), function \(\gamma(t, \lambda, \mu)\) is non-negative when \(0 < \mu \lesssim 0.2032\).
We clearly see from Figure \ref{fig:relative_error} that the error steadily increases starting at the value \(\mu \approx 0.2032\).
In the next section we will find an alternative representation to equations (\ref{eq:transprob_2}) and (\ref{eq:transprob_3}) that will lead to an algorithm for their accurate evaluation.

\section{Hypergeometric representation}\label{sec:hyper_repr}
Define
\begin{align*}
  \omega(i, j, t, \lambda, \mu) &= \binom{i + j - 1}{i - 1} \alpha(t, \lambda, \mu)^{i} \beta(t, \lambda, \mu)^{j} = \binom{i + j - 1}{i - 1} \mu^{i} \lambda^{j} \left(\frac{e^{(\lambda - \mu) t} - 1}{\lambda e^{(\lambda - \mu) t} - \mu}\right)^{i + j}\\
  z(t, \lambda, \mu) &= \frac{\gamma(t, \lambda, \mu)}{\alpha(t, \lambda, \mu) \beta(t, \lambda, \mu)} = \frac{(\lambda - \mu e^{(\lambda - \mu) t}) (\lambda e^{(\lambda - \mu) t} - \mu)}{\lambda \mu (e^{(\lambda - \mu) t} - 1)^{2}}
\end{align*}
Note that \(\omega(i, j, t, \lambda, \mu)\) is simply the first term in the summation (\ref{eq:transprob_2}).
When \(\mu = \lambda\) set
\begin{align*}
  \omega(i, j, t, \lambda, \lambda) &= \lim_{\mu \rightarrow \lambda} \omega(i, j, t, \lambda, \mu) = \binom{i + j - 1}{i - 1} \left(\frac{\lambda t}{1 + \lambda t}\right)^{i + j}\\
  z(t, \lambda, \lambda) &= \lim_{\mu \rightarrow \lambda} z(t, \lambda, \mu) = \left(\frac{1}{\lambda t}\right)^{2} - 1
\end{align*}
Multiply and divide each term in the series (\ref{eq:transprob_2}) by \(\omega(i, j, t, \lambda, \mu)\) to get
\begin{equation}\label{eq:transprob_hyper}
  \begin{split}
    p_{j}(t) &= \omega(i, j, t, \lambda, \mu) \sum_{h = 0}^{m} \frac{\binom{i}{h} \binom{i + j - h - 1}{i - 1}}{\binom{i + j - 1}{i - 1}} z(t, \lambda, \mu)^{h} = \omega(i, j, t, \lambda, \mu) \sum_{h = 0}^{m} \frac{\binom{i}{h} \binom{j}{h}}{\binom{i + j - 1}{h}} z(t, \lambda, \mu)^{h} =\\
    &= \omega(i, j, t, \lambda, \mu) \sum_{h = 0}^{m} \frac{i!}{(i - h)!} \frac{j!}{(j - h)!} \frac{(i + j - 1 - h)!}{(i + j - 1)!} \frac{z(t, \lambda, \mu)^{h}}{h!} =\\
    &= \omega(i, j, t, \lambda, \mu) \sum_{h = 0}^{m} \frac{(-i)_{h} (-j)_{h}}{(-(i + j - 1))_{h}} \frac{(-z(t, \lambda, \mu))^{h}}{h!} =\\
    &= \omega(i, j, t, \lambda, \mu) \, \F{-i, -j}{-(i + j - 1)}{-z(t, \lambda, \mu)}
  \end{split}
\end{equation}
where \((q)_{h}\) is the rising Pochhammer symbol and \({}_{2}F_{1}(a, b; c; y)\) is the Gaussian hypergeometric function \citep[Chapter 1]{slater1966_generalized}.
To evaluate (\ref{eq:transprob_hyper}) is then sufficient to separately compute the functions \(\omega(i, j, t, \lambda, \mu)\) and \({}_{2}F_{1}(-i, -j; -(i + j - 1); -z(t, \lambda, \mu))\).

Partial derivatives of \(\log p_{j}(t)\) are required for computing partial derivatives of the log-likelihood function as we will explicitly show in Section \ref{sec:likelihood}.
The following theorem proves that partial derivatives of \({}_{2}F_{1}(-i, -j; -(i + j - 1); -z(t, \lambda, \mu))\) depend on hypergeometric functions of similar nature.
\begin{theorem}\label{theorem:derivative}
Denote with \(u_{x}(x, y)\) the first-order partial derivative of \(u(x, y)\) with respect to \(x\).
Similarly, denote with \(u_{xy}(x, y)\) the second-order partial derivative with respect first to \(x\) and subsequently \(y\).
Then
\begin{equation*}
  \frac{\partial}{\partial x} \F[1]{-i, -j}{-(i + j - 1)}{-u(x, y)} = \frac{i j}{i + j - 1} u_{x}(x, y) \F{-(i - 1), -(j - 1)}{-(i + j - 2)}{-u(x, y)}
\end{equation*}
\begin{multline*}
  \frac{\partial^{2}}{\partial x \partial y} \F[1]{-i, -j}{-(i + j - 1)}{-u(x, y)} = \frac{i j}{i + j - 1} u_{xy}(x, y) \F{-(i - 1), -(j - 1)}{-(i + j - 2)}{-u(x, y)} +\\
  + \frac{i (i - 1) j (j - 1)}{(i + j - 1) (i + j - 2)} u_{x}(x, y) u_{y}(x, y) \F{-(i - 2), -(j - 2)}{-(i + j - 3)}{-u(x, y)}
\end{multline*}
\end{theorem}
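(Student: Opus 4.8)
The plan is to reduce both identities to the classical term-by-term differentiation formula for the Gaussian hypergeometric function,
\begin{equation*}
  \frac{d}{dw} {}_{2}F_{1}(a, b; c; w) = \frac{ab}{c} \, {}_{2}F_{1}(a + 1, b + 1; c + 1; w),
\end{equation*}
and then to invoke the chain and product rules. First I would establish this differentiation formula directly from the series definition \({}_{2}F_{1}(a, b; c; w) = \sum_{n \geq 0} \frac{(a)_{n} (b)_{n}}{(c)_{n}} \frac{w^{n}}{n!}\): differentiating term by term, shifting the summation index down by one, and using the Pochhammer identity \((q)_{n+1} = q\,(q+1)_{n}\) for each of \(a\), \(b\), \(c\) extracts the prefactor \(ab/c\) and raises every parameter by one. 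Because \(a = -i\) and \(b = -j\) are nonpositive integers the series terminates, so it is a polynomial in \(w\) and the termwise differentiation needs no convergence justification; one only checks that the denominator parameter \(-(i+j-1)\) never annihilates a surviving term, which holds since the effective upper limit of summation is \(m = \min(i, j) \leq i + j - 1\) for \(i, j \geq 1\).

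For the first identity I would set \(a = -i\), \(b = -j\), \(c = -(i+j-1)\) and apply the formula with \(w = -u(x, y)\). The chain rule contributes a factor \(-u_{x}(x, y)\), while the differentiation formula supplies the coefficient \(\frac{(-i)(-j)}{-(i+j-1)} = \frac{ij}{-(i+j-1)}\) together with the parameter shifts \(a + 1 = -(i-1)\), \(b + 1 = -(j-1)\), \(c + 1 = -(i+j-2)\). The two minus signs cancel, leaving exactly \(\frac{ij}{i+j-1} u_{x}(x, y) \, {}_{2}F_{1}(-(i-1), -(j-1); -(i+j-2); -u(x, y))\), as claimed.

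For the second identity I would differentiate the first with respect to \(y\) by the product rule. The factor \(u_{x}(x, y)\) differentiates to \(u_{xy}(x, y)\), producing the first term verbatim. The remaining hypergeometric factor is differentiated by re-applying the first-order result with \(i\), \(j\) replaced by \(i - 1\), \(j - 1\); this yields the prefactor \(\frac{(i-1)(j-1)}{i+j-2}\), a further factor \(u_{y}(x, y)\) from the chain rule, and the twice-shifted function \({}_{2}F_{1}(-(i-2), -(j-2); -(i+j-3); -u(x, y))\). Collecting constants gives the combined coefficient \(\frac{ij}{i+j-1} \cdot \frac{(i-1)(j-1)}{i+j-2} = \frac{i(i-1)j(j-1)}{(i+j-1)(i+j-2)}\), matching the stated cross-term.

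The computation is essentially mechanical, so the only genuine care required is bookkeeping. The main point to watch is that the sign introduced by the negative argument \(-u(x, y)\) in the chain rule combines with the negative denominator parameter \(-(i+j-1)\) to produce an overall positive coefficient, and that the parameter shifts preserve the structural pattern \((-i, -j; -(i+j-1))\) so that each successive derivative is again a hypergeometric function of the same form with all three indices reduced by one. I expect no obstacle beyond this clerical vigilance.
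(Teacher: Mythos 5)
Your proposal is correct and is essentially the paper's own argument in modular form: the paper differentiates the terminating series term by term and shifts the summation index, which is exactly the computation you invoke when deriving the classical rule \(\frac{d}{dw}\,{}_{2}F_{1}(a,b;c;w) = \frac{ab}{c}\,{}_{2}F_{1}(a+1,b+1;c+1;w)\), after which your chain/product-rule bookkeeping (including the sign cancellation and the check that \((-(i+j-1))_{h}\neq 0\) for surviving terms) matches the paper's treatment, with the second identity obtained there by "the same procedure". One small caution: your phrase "re-applying the first-order result with \(i,j\) replaced by \(i-1,j-1\)" is not literally what you do (that substitution would produce the denominator parameter \(-(i+j-4)\) pattern, i.e.\ prefactor \((i-1)(j-1)/(i+j-3)\)); you in fact correctly apply the generic derivative formula to \({}_{2}F_{1}(-(i-1),-(j-1);-(i+j-2);-u)\), whose lower parameter follows the \(k=0\) rather than \(k=1\) pattern, and your stated prefactor \((i-1)(j-1)/(i+j-2)\) confirms this.
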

\begin{proof}
\begin{equation*}
  \begin{split}
    &\frac{\partial}{\partial x} \left(1 + \sum_{h = 1}^{m} \frac{i!}{(i - h)!} \frac{j!}{(j - h)!} \frac{(i + j - 1 - h)!}{(i + j - 1)!} \frac{u(x, y)^{h}}{h!}\right) =\\
    &=u_{x}(x, y) \sum_{h = 1}^{m} \frac{i!}{(i - h)!} \frac{j!}{(j - h)!} \frac{(i + j - 1 - h)!}{(i + j - 1)!} \frac{u(x, y)^{h - 1}}{(h - 1)!} =\\
    &= \frac{i j}{i + j - 1} u_{x}(x, y) \sum_{h = 0}^{m - 1} \frac{(i - 1)!}{(i - 1 - h)!} \frac{(j - 1)!}{(j - 1 - h)!} \frac{(i + j - 2 - h)!}{(i + j - 2)!} \frac{u(x, y)^{h}}{h!} =\\
    &= \frac{i j}{i + j - 1} u_{x}(x, y) \F{-(i - 1), -(j - 1)}{-(i + j - 2)}{-u(x, y)}
  \end{split}
\end{equation*}
where \(m = \min(i, j)\).
Apply the same procedure to obtain the second-order partial derivatives.
\end{proof}

With the substitutions \(a_{1} = i - 1\) and \(b_{1} = j - 1\) the hypergeometric function in the first-order partial derivatives becomes \({}_{2}F_{1}(-a_{1}, -b_{1}; - (a_{1} + b_{1}); -z(t, \lambda, \mu))\).
Similarly, with the substitutions \(a_{2} = i - 2\) and \(b_{2} = j - 2\), the hypergeometric function in the second-order partial derivatives becomes \({}_{2}F_{1}(-a_{2}, -b_{2}; - (a_{2} + b_{2} + 1); -z(t, \lambda, \mu))\).
In general, we must be able to accurately evaluate the hypergeometric function
\begin{equation}\label{eq:hypergeometric_function}
  \F{-a, -b}{-(a + b - k)}{-z} = \sum_{h = 0}^{\min(a, b)} \frac{\binom{a}{h} \binom{b}{h}}{\binom{a + b - k}{h}} z^{h}
\end{equation}
for \(a, b \in \mathbb{N}_{+}\), \(k = 1, 0, -1, -2, \ldots\), and \(z \in \mathbb{R}\).

\section{Hypergeometric function \texorpdfstring{\({}_{2}F_{1}(-a, -b; -(a + b - k); -z)\)}{2F1(-a, -b; -(a + b - k); -z)}}\label{sec:hyper_func}
The following theorem can be considered the main result of this article.
\begin{theorem}\label{theorem:recurrence}
The hypergeometric function \({}_{2}F_{1}(-a, -b; -(a + b - k); -z)\), as a function of \(b\), is a solution of the three-term recurrence relation (TTRR)
\begin{equation}\label{eq:recurrence}
  (a + b + 1 - k) (a + b - k) y_{b + 1} - (a + b - k) (a + b + 1 - k + (a - b) z) y_{b} - b (b - k) z y_{b - 1} = 0
\end{equation}
\end{theorem}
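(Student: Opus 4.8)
The plan is to verify the recurrence directly at the level of power-series coefficients. Write $y_{b} = {}_2F_1(-a,-b;-(a+b-k);-z)$ as the finite series (\ref{eq:hypergeometric_function}) and, for brevity, set
\[
  c_{h}^{(b)} = \frac{\binom{a}{h}\binom{b}{h}}{\binom{a+b-k}{h}},
  \qquad
  y_{b} = \sum_{h \ge 0} c_{h}^{(b)} z^{h},
\]
where the sum terminates automatically because $c_{h}^{(b)} = 0$ whenever $h > \min(a,b)$ (one of the factors $\binom{a}{h}$, $\binom{b}{h}$ then vanishes). First I would substitute these series for $y_{b-1}$, $y_{b}$, and $y_{b+1}$ into the left-hand side of (\ref{eq:recurrence}), noting that the summands $(a-b)z\,y_{b}$ and $b(b-k)z\,y_{b-1}$ each raise the power of $z$ by one, and then collect the coefficient of $z^{h}$. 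The recurrence holds if and only if, for every $h \ge 0$,
\begin{multline*}
  (a+b+1-k)(a+b-k)\,c_{h}^{(b+1)} - (a+b-k)(a+b+1-k)\,c_{h}^{(b)} \\
  {}- (a+b-k)(a-b)\,c_{h-1}^{(b)} - b(b-k)\,c_{h-1}^{(b-1)} = 0 .
\end{multline*}

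The bulk of the work is then the elementary algebra of this single identity. I would record the neighbouring-coefficient ratios that follow from $\binom{N+1}{h}/\binom{N}{h} = (N+1)/(N+1-h)$ and $\binom{M}{h}/\binom{M}{h-1} = (M-h+1)/h$, namely
\begin{align*}
  \frac{c_{h}^{(b+1)}}{c_{h}^{(b)}} &= \frac{(b+1)\,(a+b+1-k-h)}{(b+1-h)\,(a+b+1-k)}, \\
  \frac{c_{h}^{(b)}}{c_{h-1}^{(b)}} &= \frac{(a-h+1)\,(b-h+1)}{h\,(a+b-k-h+1)} ,
\end{align*}
together with the analogous $b \to b-1$ shift, which yields $c_{h-1}^{(b-1)} = c_{h}^{(b)}\,h(a+b-k)/\big((a-h+1)\,b\big)$. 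Using these to express $c_{h}^{(b+1)}$, $c_{h-1}^{(b)}$, and $c_{h-1}^{(b-1)}$ as rational multiples of $c_{h}^{(b)}$, one sees that every term carries the common factor $(a+b-k)$; cancelling it, multiplying through by $(a-h+1)(b-h+1)$, and expanding reduces the whole statement to a single polynomial identity in $a,b,h,k$ that I would confirm by direct computation. The $h=0$ case is immediate, since $c_{0}^{(b)}=1$ for all $b$ and $c_{-1}^{(\cdot)}=0$.

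The step I expect to require the most care is not the expansion but the bookkeeping of the finite, $b$-dependent summation range. Because the upper limit $\min(a,b)$ changes when $b$ is shifted, the top-order coefficients of $y_{b+1}$, $y_{b}$, and $y_{b-1}$ sit at indices where one or more of the four $c$-terms is genuinely absent, so the ratios above — and in particular the division by $c_{h}^{(b)}$ — are only valid away from these boundaries. Rather than handle the edges as separate cases, I would clear all denominators from the outset and verify the coefficient identity as a polynomial identity in the integer variables $a,b,h,k$; then the vanishing of $\binom{a}{h}$, $\binom{b}{h}$, or $\binom{b-1}{h-1}$ at the boundary is encoded automatically and no term is lost. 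With this convention a single uniform check covers all $h$, and the manipulation via the ratios serves only as a convenient route to \emph{guess} the reduced identity, not as its proof.
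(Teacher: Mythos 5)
Your proposal is correct, but it takes a genuinely different route from the paper's. The paper proves the recurrence by creative telescoping: it groups the left-hand side by the hypergeometric summation index, writing it as \(\sum_h t_h\) with \(t_h\) built from the terms \(L_{b,h} = c_h^{(b)} z^h\), and since the individual \(t_h\) do \emph{not} vanish it must produce a certificate \(R_{b,h}\) with \(t_h = R_{b,h+1}L_{b-1,h+1} - R_{b,h}L_{b-1,h}\), so that the sum collapses by telescoping. You instead group by powers of \(z\): because the recurrence is asserted identically in \(z\) and its left-hand side is a polynomial in \(z\), it holds if and only if every coefficient of \(z^h\) vanishes, and your four-term coefficient identity is exactly that statement (it is the \(z^h\)-part of the paper's \(t_h\) plus the \(z^h\)-part of \(t_{h-1}\)). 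The payoff of your regrouping is that each coefficient vanishes \emph{identically}, so no certificate is needed. I verified that your outlined computation closes: after dividing by \(c_h^{(b)}\), cancelling the common factor \((a+b-k)\), and multiplying by \((a-h+1)(b+1-h)\), the claim becomes
\begin{multline*}
(b+1)(a+b+1-k-h)(a-h+1) - (a+b+1-k)(a-h+1)(b+1-h) \\
- (a-b)\,h\,(a+b-k-h+1) - (b-k)\,h\,(b+1-h) = 0,
\end{multline*}
where the first two terms combine to \(h(a-k)(a-h+1)\), after which everything cancels. Your concern about the \(b\)-dependent summation range is legitimate, and your fix is sound: writing each of the four \(c\)-quantities as a common factorial prefactor times a polynomial in \((a,b,h,k)\) (equivalently, adopting the convention that a binomial coefficient vanishes when its lower index exceeds its upper index) lets the single polynomial identity above cover the boundary indices, where it degenerates to the correct sub-identities — for instance at \(h = a+1\) with \(a < b\) it reduces to \((a+b-k)(b-a)\,c_a^{(b)} = b(b-k)\,c_a^{(b-1)}\), which indeed holds. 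It is worth noting that the paper's own proof is no more careful at these edges: it divides \(t_h\) and \(u_h\) by \(L_{b-1,h}\), which vanishes for \(h > \min(a,b-1)\), and its certificate \(R_{b,h}\) has poles at \(h = b\) and \(h = b+1\), so your explicit handling of the boundary is, if anything, the more rigorous of the two. What the paper's formulation buys in exchange is that the certificate (produced by Zeilberger's algorithm) gives a single rational-function identity uniform in \(h\); what yours buys is a fully elementary, self-contained verification that requires nothing to be guessed.
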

\begin{proof}
The recursion can be obtained by the method of creative telescoping \citep{petkovsek1996_aequalb, zeilberger1991jsc_method}.
To prove that it holds, define
\begin{equation*}
  L_{b, h} = \frac{a!}{(a - h)!} \frac{b!}{(b - h)!} \frac{(a + b - k - h)!}{(a + b - k)!} \frac{z^{h}}{h!}
\end{equation*}
and let
\begin{equation*}
  t_h = (a + b + 1 - k) (a + b - k) L_{b + 1, h} - (a + b - k) (a + b + 1 - k + (a - b) z) L_{b, h} - b (b - k) z L_{b - 1, h}
\end{equation*}
Note that \(\sum_{h} t_{h}\) is the left hand side of equation (\ref{eq:recurrence}) because \(y_{b} = \sum_{h} L_{b, h}\).
Set
\begin{equation*}
  R_{b, h} = - \frac{(a - k) (a + b - k - h) b h}{(b + 1 - h) (b - h)}
\end{equation*}
and let
\begin{equation*}
  u_{h} = R_{b, h + 1} L_{b - 1, h + 1} - R_{b, h} L_{b - 1, h}
\end{equation*}
Sum the previous expression with respect to \(h\) to obtain \(\sum_{h} u_{h} = - R_{b, 0} L_{b - 1, 0} = 0\).
We now need to prove that \(t_{h} = u_{h}\) for all \(h\).
Start by dividing \(t_{h}\) by \(L_{b - 1, h}\) to obtain
\begin{multline*}
  \frac{(a + b + 1 - k - h) (a + b - k - h) (b + 1) b}{(b + 1 - h) (b - h)} +\\
  - \frac{b (a + b - k - h) (a + b + 1 - k + (a - b) z)}{b - h} - b (b - k) z
\end{multline*}
By expanding the polynomial and collecting the terms with respect to \(h\) we get
\begin{equation*}
  \frac{t_h}{L_{b - 1, h}} = -\frac{(a - k) b}{(b + 1 - h) (b - h)} \left[(1 + z) h^{2} - (a + b - k + (a + b + 1) z) h + a (b + 1) z \right]
\end{equation*}
Doing the same with the right hand side we get
\begin{equation*}
  \begin{split}
    \frac{u_{h}}{L_{b - 1, h}} &= -\frac{(a - k) b}{(b + 1 - h) (b - h)} \left[(b + 1 - h ) (a - h) z - h (a + b - k - h)\right] =\\
    &= -\frac{(a - k) b}{(b + 1 - h) (b - h)} \left[(1 + z) h^{2} - (a + b - k + (a + b + 1) z) h + a (b + 1) z \right]
  \end{split}
\end{equation*}
proving the equality.
\end{proof}

If we divide both sides of equation (\ref{eq:recurrence}) by the coefficient of \(y_{b + 1}\), and shift the index by 1, we obtain the \textit{forward} recursion
\begin{equation}\label{eq:forward_recursion}
  y_{b} = \left(1 + \frac{(a + 1 - b) z}{a + b - k}\right) y_{b - 1} + \frac{(b - 1) (b - 1 - k) z}{(a + b - k) (a + b - 1 - k)} y_{b - 2}
\end{equation}
Starting from
\begin{equation*}
  \begin{split}
    y_{0} &= {}_{2}F_{1}(-a, 0; -(a - k); -z) = 1\\
    y_{1} &= {}_{2}F_{1}(-a, 1; -(a + 1 - k); -z) = 1 + \frac{a z}{a + 1 - k}
  \end{split}
\end{equation*}
we can, in principle, obtain all remaining solutions all the way up to the required term.
Note that \(a \geq 1\) and \(k \leq 1\), therefore the denominator \(a + 1 - k\) is strictly positive and always well defined.
Knowing the values of \(y_{b + 2}\) and \(y_{b + 1}\), for large \(b\), we can travel the recursion also in a \textit{backward} manner:
\begin{equation}\label{eq:backward_recursion}
  y_{b} = \frac{(a + b + 2 - k) (a + b + 1 - k)}{(b + 1) (b + 1 - k) z} \left(y_{b + 2} - \left(1 + \frac{(a - b - 1) z}{a + b + 2 - k}\right) y_{b + 1}\right)
\end{equation}

Theorem \ref{theorem:recurrence} proves that the hypergeometric function \({}_{2}F_{1}(-a, -b; -(a + b - k); -z)\) is a solution of a TTRR.
However, equation (\ref{eq:recurrence}) can also admit a second linearly independent solution.

\begin{definition}\label{def:minimal_dominant}
A solution \(f_{b}\) of a TTRR is said to be a \textit{minimal} solution if there exists a linearly independent solution \(g_{b}\) such that
\begin{equation*}
  \lim_{b \rightarrow \infty} \frac{f_{b}}{g_{b}} = 0
\end{equation*}
The solution \(g_{b}\) is called a \textit{dominant} solution.
\end{definition}

It is well known that, regardless of the starting values, forward evaluation of a TTRR converges to the dominant solution while backward evaluation converges instead to the minimal solution \citep[Chapter 4]{gil2007_numerical}.
We now need to find the conditions under which our hypergeometric function is either the dominant or the minimal solution.

\begin{lemma}[Poincaré-Perron]\label{lemma:poinperro}
Let \(y_{b + 1} + v_{b} y_{b} + u_{b} y_{b - 1} = 0\) and suppose that \(v_{b}\) and \(u_{b}\) are different from zero for all \(b > 0\).
Suppose also that \(\lim_{b \rightarrow \infty} v_{b} = v\) and \(\lim_{b \rightarrow \infty} u_{b} = u\).
Denote with \(\phi_{1}\) and \(\phi_{2}\) the (not necessarily distinct) roots of the characteristic equation \(\phi^{2} + v \phi + u = 0\).
If \(f_{b}\) and \(g_{b}\) are the linearly independent non-trivial solutions of the difference equation, then
\begin{equation*}
  \limsup_{b \rightarrow \infty} \sqrt[b]{|f_{b}|} = |\phi_{1}|, \quad \limsup_{b \rightarrow \infty} \sqrt[b]{|g_{b}|} = |\phi_{2}|
\end{equation*}
If \(|\phi_{1}| < |\phi_{2}|\) it is also
\begin{equation*}
  \lim_{b \rightarrow \infty} \frac{f_{b + 1}}{f_{b}} = \phi_{1}, \quad \lim_{b \rightarrow \infty} \frac{g_{b + 1}}{g_{b}} = \phi_{2}
\end{equation*}
and \(f_{b}\) is the minimal solution while \(g_{b}\) is the dominant solution.
If \(|\phi_{1}| = |\phi_{2}|\) the lemma is inconclusive about the existence of a minimal solution.
\end{lemma}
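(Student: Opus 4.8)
The plan is to reduce the second-order recurrence to a first-order (Riccati-type) iteration for the ratio \(r_{b} = y_{b+1}/y_{b}\) and to read off all the asymptotics from the fixed-point structure of its limiting map. Dividing \(y_{b+1} + v_{b} y_{b} + u_{b} y_{b-1} = 0\) by \(y_{b}\) gives \(r_{b} = -v_{b} - u_{b}/r_{b-1}\); since \(v_{b} \to v\) and \(u_{b} \to u\), the limiting map is \(T(r) = -v - u/r\), whose fixed points solve \(r^{2} + v r + u = 0\) and are therefore exactly \(\phi_{1}\) and \(\phi_{2}\). A short computation gives \(T'(r) = u/r^{2}\), hence \(T'(\phi_{s}) = \phi_{1}\phi_{2}/\phi_{s}^{2}\), so that \(|T'(\phi_{2})| = |\phi_{1}/\phi_{2}| < 1\) and \(|T'(\phi_{1})| = |\phi_{2}/\phi_{1}| > 1\) whenever \(|\phi_{1}| < |\phi_{2}|\). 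Thus \(\phi_{2}\) is attracting and \(\phi_{1}\) repelling for the ratio iteration, which is the analytic heart of the dominant/minimal dichotomy.

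First I would construct the dominant solution. Because \(\phi_{2}\) is attracting and the coefficients of the iteration converge to those of \(T\), I would fix a small disc about \(\phi_{2}\) on which the maps \(r \mapsto -v_{b} - u_{b}/r\) are eventually uniform contractions (for \(b\) large), show that an orbit entering this disc stays trapped and converges to \(\phi_{2}\), and conclude that there is a solution \(g_{b}\) with \(g_{b+1}/g_{b} \to \phi_{2}\). Telescoping \(\log|g_{b}| = \sum_{b'} \log|r_{b'}| + \mathrm{const}\) and applying the Cesàro mean then yields \(\sqrt[b]{|g_{b}|} \to |\phi_{2}|\).

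The \emph{main obstacle} is the existence of the minimal solution, i.e.\ a solution whose ratio converges to the repelling fixed point \(\phi_{1}\); since \(\phi_{1}\) repels, only an exceptional one-dimensional family of initial data can be attracted to it. I would obtain it by reversing the recurrence: solving \(u_{b} y_{b-1} = -y_{b+1} - v_{b} y_{b}\) for \(y_{b-1}\) interchanges the roles of the two roots, and the resulting backward ratio map has \(1/\phi_{1}\) as its attracting fixed point (its derivative there equals \(\phi_{1}/\phi_{2}\)). Running the recurrence backward from generic data, by the same contraction argument, produces a solution \(f_{b}\) with \(f_{b+1}/f_{b} \to \phi_{1}\) and hence \(\sqrt[b]{|f_{b}|} \to |\phi_{1}|\). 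An alternative is a direct stable-manifold construction, setting up a contraction on tail sequences normalised by \(\phi_{1}^{b}\) in the spirit of Perron's theorem, which also yields uniqueness up to a scalar. I expect making this backward/stable-manifold step fully rigorous, in particular controlling the non-autonomous perturbation uniformly, to be the part requiring the most care.

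Finally I would assemble the conclusions. With \(f_{b+1}/f_{b} \to \phi_{1}\) and \(g_{b+1}/g_{b} \to \phi_{2}\) the two root-limits are as claimed, and \(\{f_{b}, g_{b}\}\) is a fundamental system because their ratios have distinct limits. Since \(|\phi_{1}| < |\phi_{2}|\) we get \(f_{b}/g_{b} \to 0\), so \(f_{b}\) is minimal and \(g_{b}\) dominant in the sense of Definition \ref{def:minimal_dominant}; writing an arbitrary solution as \(c_{1} f_{b} + c_{2} g_{b}\) shows its growth rate is \(|\phi_{2}|\) unless \(c_{2} = 0\). When \(|\phi_{1}| = |\phi_{2}|\) the derivative bound \(|T'(\phi_{s})| = 1\) destroys the contraction at both fixed points and the two growth rates coincide, so neither the attracting/repelling split nor the minimal/dominant distinction can be drawn, which is exactly the inconclusive case flagged in the statement.
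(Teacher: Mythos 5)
You should first be aware that the paper does not actually prove this lemma: its ``proof'' is the citation to Chapter 8 of Elaydi (2005), since this is the classical Poincar\'e--Perron theorem. So your attempt is necessarily a departure, and much of it is sound. The reduction to the ratio iteration \(r_{b} = -v_{b} - u_{b}/r_{b-1}\), the identification of the fixed points of \(T(r) = -v - u/r\) with \(\phi_{1}, \phi_{2}\), the computation \(|T'(\phi_{2})| = |\phi_{1}/\phi_{2}| < 1\) (note \(\phi_{2} \neq 0\) since \(|\phi_{2}| > |\phi_{1}| \geq 0\), so a disc about \(\phi_{2}\) avoiding the pole exists), the trapped-orbit argument with the non-autonomous perturbation \(|r_{b+1} - \phi_{2}| \leq \epsilon_{b} + c\,|r_{b} - \phi_{2}|\), \(c < 1\), and the Ces\`aro step from \(g_{b+1}/g_{b} \rightarrow \phi_{2}\) to \(\sqrt[b]{|g_{b}|} \rightarrow |\phi_{2}|\) together give a correct construction of the dominant solution.

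The genuine gap is your construction of the minimal solution. Running the recurrence backward ``from generic data'' does \emph{not} produce a solution whose ratio tends to \(\phi_{1}\) as \(b \rightarrow +\infty\). The attracting fixed point \(1/\phi_{1}\) of the backward ratio map governs the iterates as the index \emph{decreases}: starting the backward recursion at a finite index \(N\) gives \(y_{b}/y_{b+1} \approx 1/\phi_{1}\) for \(b \ll N\), but that same solution, continued forward past \(N\), is generically dominant, with \(y_{b+1}/y_{b} \rightarrow \phi_{2}\) as \(b \rightarrow \infty\) --- the minimal solutions form only a one-dimensional subspace of the two-dimensional solution space, and generic data miss it. (This is exactly the phenomenon that makes minimal solutions numerically delicate and motivates Miller-type backward algorithms.) To repair it you must take a limit over starting indices: define \(y^{(N)}\) by backward recursion from \(y^{(N)}_{N+1} = 0\), \(y^{(N)}_{N} = 1\), and prove that a suitable normalisation of \(y^{(N)}_{b}\) converges as \(N \rightarrow \infty\) to a nontrivial solution; that limiting/compactness argument is the crux and is absent. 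Your named alternative --- a Perron-style fixed point on tail sequences normalised by \(\phi_{1}^{b}\) --- is the correct route, but it is mentioned rather than executed, and it is not a minor finishing touch: it \emph{is} the theorem. Finally, note that your method establishes the \(\limsup\) root claims only when \(|\phi_{1}| < |\phi_{2}|\); Perron's theorem asserts the existence of a fundamental system with those growth rates even when \(|\phi_{1}| = |\phi_{2}|\) (requiring only \(u_{b} \neq 0\)), a case where your contraction machinery is unavailable --- the inconclusiveness in the statement concerns only the minimal/dominant dichotomy, not those growth-rate equalities.
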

\begin{proof}
See Chapter 8 of \citet{elaydi2005_introduction}.
\end{proof}

Using Lemma \ref{lemma:poinperro} we can study the nature of our hypergeometric function as a solution of the TTRR.

\begin{theorem}\label{theorem:minimal_dominant}
\({}_{2}F_{1}(-a, -b; -(a + b - k); -z)\) is a dominant solution of equation (\ref{eq:recurrence}) when \(|z| < 1\).
It is a minimal solution when \(|z| > 1\).
The nature of the solution is unknown when \(|z| = 1\).
\end{theorem}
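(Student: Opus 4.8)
The plan is to apply the Poincaré–Perron Lemma (Lemma \ref{lemma:poinperro}) directly to equation (\ref{eq:recurrence}), so the first task is to bring that recurrence into the normalized form \(y_{b+1} + v_{b} y_{b} + u_{b} y_{b-1} = 0\). Dividing (\ref{eq:recurrence}) by its leading coefficient \((a + b + 1 - k)(a + b - k)\) gives
\[
  v_{b} = -\frac{a + b + 1 - k + (a - b) z}{a + b + 1 - k}, \qquad u_{b} = -\frac{b (b - k) z}{(a + b + 1 - k)(a + b - k)}.
\]
Since we have already discarded the case \(z = 0\) (it corresponds to \(\gamma = 0\), treated in Corollary \ref{corol:zeroterm}) and since \(a \geq 1\) and \(k \leq 1\), these coefficients are nonzero for all sufficiently large \(b\), which is all that the asymptotic conclusions of Lemma \ref{lemma:poinperro} require.

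Second, I would compute the two limits. As \(b \to \infty\) the ratio \((a - b)/(a + b + 1 - k) \to -1\), so \(v_{b} \to z - 1\), and comparing leading powers of \(b\) gives \(u_{b} \to -z\); hence \(v = z - 1\) and \(u = -z\). The characteristic equation \(\phi^{2} + (z - 1)\phi - z = 0\) factors cleanly as \((\phi - 1)(\phi + z) = 0\), so its roots are \(\phi_{1} = 1\) and \(\phi_{2} = -z\), with \(|\phi_{2}| = |z|\).

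The crux of the argument — and the step I expect to be the main obstacle — is to pin down the asymptotic growth of the hypergeometric function itself and thereby decide which characteristic root it follows. For fixed \(a\) and \(b \geq a\) the sum in (\ref{eq:hypergeometric_function}) has the fixed finite range \(h = 0, \ldots, a\), and for each \(h\) the quotient \(\binom{b}{h}/\binom{a + b - k}{h}\) is a ratio of two products of \(h\) factors, each asymptotic to \(b^{h}\), so it tends to \(1\). Term by term the function therefore converges to \(\sum_{h = 0}^{a} \binom{a}{h} z^{h} = (1 + z)^{a}\). Provided \(z \neq -1\) this limit is a nonzero constant, so \(\limsup_{b \to \infty} \sqrt[b]{|y_{b}|} = 1\) and \(\lim_{b \to \infty} y_{b+1}/y_{b} = 1\); that is, the hypergeometric function behaves asymptotically like the root \(\phi_{1} = 1\).

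Finally I would compare magnitudes. When \(|z| < 1\) we have \(|\phi_{2}| = |z| < 1 = |\phi_{1}|\), so by Lemma \ref{lemma:poinperro} the root of larger modulus, \(\phi_{1} = 1\), is the dominant one; since our function follows \(\phi_{1}\) it is the dominant solution. When \(|z| > 1\) the inequality reverses, \(\phi_{1} = 1\) becomes the root of smaller modulus, and the same function is now the minimal solution. When \(|z| = 1\) the two roots have equal modulus (and moreover \(z = -1\), the one place where the limit \((1 + z)^{a}\) degenerates, lies in this regime), so Lemma \ref{lemma:poinperro} is inconclusive, matching the statement. The only points needing care are the technical hypotheses of the lemma, handled by the eventual nonvanishing of \(v_{b}\) and \(u_{b}\), and confirming that the degenerate value \(z = -1\) falls inside the already-inconclusive case \(|z| = 1\).
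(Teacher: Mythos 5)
Your proposal is correct and follows essentially the same route as the paper: normalize the recurrence, apply the Poincar\'e--Perron lemma to get the characteristic roots \(1\) and \(-z\), and tie the hypergeometric function to the root \(1\) by showing it converges to a nonzero constant as \(b \to \infty\), so that its consecutive ratios tend to \(1\). The only substantive difference is that you compute this limit as \((1+z)^{a}\) via the elementary ratio \(\binom{b}{h}/\binom{a+b-k}{h} \to 1\) --- which is in fact the correct value --- whereas the paper uses Stirling's approximation on the Pochhammer ratios and reports \((1-z)^{a}\), an apparent sign slip that is harmless because either constant vanishes only at a point of the already-inconclusive circle \(|z| = 1\), a degenerate case you (unlike the paper) handle explicitly.
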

\begin{proof}
Our TTRR is
\begin{equation*}
  y_{b + 1} - \left(1 + \frac{(a - b) z}{a + b + 1 - k}\right) y_{b} - \frac{b (b - k) z}{(a + b + 1 - k) (a + b - k)} y_{b - 1} = 0
\end{equation*}
Take the limit of the coefficients
\begin{equation*}
  \lim_{b \rightarrow \infty} - \left(1 + \frac{a - b}{a + b + 1 - k} z\right) = - (1 - z), \quad \lim_{b \rightarrow \infty} - \frac{b (b - k) z}{(a + b + 1 - k) (a + b - k)} = -z
\end{equation*}
The characteristic equation is \(\phi^{2} - (1 - z) \phi - z = 0\) with solutions \(\phi_{1} = -z\) and \(\phi_{2} = 1\).
When \(|z| < 1\) the solution associated with \(\phi_{1}\) is minimal and the one associated with \(\phi_{2}\) is dominant.
The opposite is true when \(|z| > 1\).
We will now prove that \({}_{2}F_{1}(-a, -b; -(a + b - k); -z)\) is associated with the characteristic root \(\phi_{2} = 1\).
The summation index \(h\) in equation (\ref{eq:hypergeometric_function}) depends on \(b\) since the upper bound of the series is the minimum between \(a\) and \(b\).
Note, however, that variable \(a\) is considered known and fixed to a finite value.
When \(b\) goes to infinity the summation index \(h\) in (\ref{eq:hypergeometric_function}) does not depend on \(b\) any more and the series is always finite, so that we can safely exchange the limit of the sum with the sum of the limits:
\begin{equation*}
  \lim_{b \rightarrow \infty} \sum_{h = 0}^{\min(a, b)} \frac{(-a)_{h} (-b)_{h}}{(-(a + b - k))_{h}} \frac{(-z)^{h}}{h!} = \sum_{h = 0}^{a} (-a)_{h} \frac{(-z)^{h}}{h!} \lim_{b \rightarrow \infty} \frac{(-b)_{h}}{(-(a + b - k))_{h}}
\end{equation*}
Using Stirling's approximation \(n! \sim \sqrt{2 \pi n} (n / e)^{n}\) for large \(n\), we obtain
\begin{equation*}
  \begin{split}
    &\lim_{b \rightarrow \infty} \frac{(-b)_{h}}{(-(a + b - k))_{h}} = \lim_{b \rightarrow \infty} \frac{b^{b + 1/2} (a + b - k - h)^{a + b - k - h + 1/2}}{(b - h)^{b - h + 1/2} (a + b - k)^{a + b - k + 1/2}} =\\
    = &\lim_{b \rightarrow \infty} \left(\frac{b}{b - h}\right)^{b + 1/2} \left(\frac{a + b - k - h}{a + b - k}\right)^{b - k + 1/2} \left(\frac{a + b - k - h}{a + b - k}\right)^{a} \left(\frac{b - h}{a + b - k - h}\right)^{h} =\\
    = &\lim_{b \rightarrow \infty} \left(\frac{b}{b - h}\right)^{b + 1/2} \left(\frac{a + b - k - h}{a + b - k}\right)^{b - k + 1/2} = e^{h} e^{-h} = 1
  \end{split}
\end{equation*}
from which follows that
\begin{equation*}
  \lim_{b \rightarrow \infty} \F{-a, -b}{-(a + b - k)}{-z} = \sum_{h = 0}^{a} (-a)_{h} \frac{(-z)^{h}}{h!} = \sum_{h = 0}^{a} \binom{a}{h} (-z)^{h} = (1 - z)^{a}
\end{equation*}
and
\begin{equation*}
  \lim_{b \rightarrow \infty} \frac{\F{-a, -(b + 1)}{-(a + b + 1 - k)}{-z}}{\F{-a, -b}{-(a + b - k)}{-z}} = \frac{(1 - z)^{a}}{(1 - z)^{a}} = 1
\end{equation*}
The solution is therefore dominant when \(|z| < 1\) and minimal for \(|z| > 1\).
When \(|z| = 1\) Lemma \ref{lemma:poinperro} is inconclusive about the nature of the solution.
\end{proof}

Since Lemma \ref{lemma:poinperro} refers to asymptotic results, Theorem \ref{theorem:minimal_dominant} is always valid for large values of \(b\).
For small values of \(b\), instead, there is a possibility of anomalous behaviour as described by \citet{deano2007mc_transitory}.
By Definition \ref{def:minimal_dominant} we would expect that the sequence of ratios between a minimal and a dominant solution would be monotonically decreasing to zero for all \(b\).
There are cases, however, in which this is not necessarily true.
A minimal solution might behave as a dominant solution up to a finite value \(b^{*}\) and then switch to its asymptotic minimal nature only starting at \(b^{*} + 1\).

\begin{definition}
Let \(f_{b}\) and \(g_{b}\) be respectively the minimal and dominant solution of a TTRR as \(b \rightarrow \infty\).
\(f_{b}\) is said to be \textit{pseudo-dominant} for all \(b \leq b^{*}\) if the sequence \(\{R_{b} = |f_{b} / g_{b}|\}\) is increasing for \(b \leq b^{*}\) but decreasing for \(b > b^{*}\).
\end{definition}

\begin{lemma}[Dea{\~n}o-Segura]\label{lemma:pseudominimal}
Let \(y_{b + 1} + v_{b} y_{b} + u_{b} y_{b - 1} = 0\) be a recurrence such that, for \(b \geq b^{-}\), \(u_{b} < 0\) and \(v_{b}\) changes sign at \(b^{*} > b^{-} + 1\).
Suppose that there exists a solution \(f_{b}\) with fixed pattern of signs for all \(b \geq b^{-}\), the pattern being alternating if \(v_{b} < 0\) for large \(b\) or with constant sign if \(v_{b} > 0\) for large \(b\) (\(f_{b}\) may be minimal).
Let \(g_{b}\) be any solution (not minimal) such that
\begin{equation*}
  \frac{g_{b^{*} + 1}}{g_{b^{*}}} = - \psi \frac{f_{b^{*} + 1}}{f_{b^{*}}}, \qquad \psi > 0,
\end{equation*}
and let \(R_{b} = |f_{b} / g_{b}|\), then for \(b \geq b^{-}\) the following holds depending on the value \(\psi\):
\begin{itemize}
  \item[(1)] if \(\psi > 1\), then \(R_{b} < R_{b^{*}}\) if \(b \neq b^{*}\).
  \item[(2)] if \(\psi < 1\), then \(R_{b} < R_{b^{*} + 1}\) if \(b \neq b^{*} + 1\).
  \item[(3)] if \(\psi = 1\), then \(R_{b} < R_{b^{*}} = R_{b^{*} + 1}\) if \(b \neq b^{*}, b^{*} + 1\).
\end{itemize}
\end{lemma}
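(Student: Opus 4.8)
The plan is to reduce all three cases to a single unimodality statement about the sequence $R_b$ and then read off the position of the maximum from $\psi$. Writing $P_b = f_{b+1}/f_b$ and $Q_b = g_{b+1}/g_b$, the hypothesis $g_{b^*+1}/g_{b^*} = -\psi\, f_{b^*+1}/f_{b^*}$ gives at once $R_{b^*+1}/R_{b^*} = |P_{b^*}|/|Q_{b^*}| = 1/\psi$. Hence it suffices to prove that $R_b$ is strictly increasing for $b^- \le b \le b^*$ and strictly decreasing for $b \ge b^*+1$; the inequalities in $(1)$--$(3)$ then follow by elementary bookkeeping, with the single transitional step $b^* \to b^*+1$ (of ratio $1/\psi$) deciding whether the peak sits at $b^*$, at $b^*+1$, or is shared.

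First I would introduce the Casoratian $C_b = f_b g_{b+1} - f_{b+1} g_b$. Substituting the recurrence $y_{b+1} = -v_b y_b - u_b y_{b-1}$ shows $C_b = u_b C_{b-1}$, so (we may assume $f_b$ and $g_b$ linearly independent, as otherwise $R_b$ is constant and there is nothing to prove) we have $C_b \ne 0$, and because $u_b < 0$ for $b \ge b^-$ the sign of $C_b$ alternates on that range. Next I would turn the monotonicity of $R_b$ into a sign condition through the factorisation
\[
  R_{b+1}^2 - R_b^2 = \frac{f_{b+1}^2 g_b^2 - f_b^2 g_{b+1}^2}{(g_b g_{b+1})^2} = \frac{-C_b\,(f_{b+1} g_b + f_b g_{b+1})}{(g_b g_{b+1})^2}.
\]
Writing $D_b = f_{b+1} g_b + f_b g_{b+1}$ and using $R_b > 0$, this yields the clean criterion $\operatorname{sign}(R_{b+1} - R_b) = \operatorname{sign}(-C_b D_b)$, so the whole problem becomes one of tracking the sign of $-C_b D_b$.

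The heart of the argument is then a sign analysis driven by the two structural hypotheses: $u_b < 0$ throughout (which, via the Casoratian, forces the alternation of $C_b$) and the single sign change of $v_b$ at $b^*$. Starting from the prescribed fixed sign pattern of $f_b$, I would use the recurrence to propagate the sign pattern of $g_b$, both upward from $b^*$ (where $v_b$ has its large-$b$ sign and $u_b < 0$) and downward toward $b^-$ (where $v_b$ has the opposite sign). The condition $Q_{b^*} = -\psi P_{b^*}$ fixes the relative signs of $f$ and $g$ at $b^*$: in the constant-sign case $g$ changes sign across $b^*$ while $f$ does not, and in the alternating case the reverse happens, so in both cases the relative pattern of $f$ and $g$ \emph{crosses} exactly at $b^*$. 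Factoring the alternation of $f$ (and of $g$, where present) out of $D_b$ leaves a reduced quantity whose sign I would show is constant on each side of $b^*$, so that, combined with the alternation of $C_b$, the product $-C_b D_b$ is positive for $b < b^*$ and negative for $b > b^*$. At $b = b^*$ itself the relevant factor is $P_{b^*} + Q_{b^*} = (1 - \psi) P_{b^*}$, which is precisely where the trichotomy in $\psi$ enters.

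The step I expect to be the main obstacle is exactly this propagation of the sign pattern of the non-minimal solution $g_b$ through the sign change of $v_b$, together with the proof that the reduced form of $D_b$ keeps a constant sign away from $b^*$. Since $g_b$ is only assumed non-minimal, its behaviour is not pinned down by a single asymptotic regime, so the propagation must be carried out carefully from the recurrence, handling the alternating-$f$ and constant-sign-$f$ cases in parallel and verifying in passing that $g_b \ne 0$ at the integer nodes so that $R_b$ is well defined. Once the sign of $-C_b D_b$ is settled on the two sides of $b^*$, the unimodality of $R_b$, and hence claims $(1)$--$(3)$, follow immediately.
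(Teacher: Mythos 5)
Your algebraic setup is sound: the Casoratian identity \(C_b = u_b C_{b-1}\) (hence alternation of \(\operatorname{sgn} C_b\) once \(u_b < 0\)), the factorisation \(R_{b+1}^2 - R_b^2 = -C_b D_b /(g_b g_{b+1})^2\) with \(D_b = f_{b+1} g_b + f_b g_{b+1}\), the observation \(R_{b^*+1}/R_{b^*} = 1/\psi\), and the evaluation \(D_{b^*} = f_{b^*} g_{b^*} (1-\psi) P_{b^*}\) are all correct, and they do reduce the lemma to showing that \(-C_b D_b > 0\) for \(b^- \le b < b^*\) and \(-C_b D_b < 0\) for \(b > b^*\). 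But that is precisely where your argument stops, and the deferred step is not a technicality: it is the entire mathematical content of the lemma. The hypothesis \(g_{b^*+1}/g_{b^*} = -\psi f_{b^*+1}/f_{b^*}\) constrains only the single step from \(b^*\) to \(b^*+1\); your assertion that ``the relative pattern of \(f\) and \(g\) crosses exactly at \(b^*\)'' is therefore not a consequence of the hypotheses but a restatement of what must be proved. Any additional sign crossing of \(g\) relative to \(f\) away from \(b^*\) would flip \(\operatorname{sgn} D_b\) there, produce another local maximum of \(R_b\), and destroy the claimed unimodality; excluding such crossings requires an inductive propagation through the recurrence, carried out separately in the two regimes of \(\operatorname{sgn} v_b\) (forward from \(b^*\) and backward towards \(b^-\)), in both the alternating and constant-sign cases, together with a proof that \(g_b \neq 0\) at every node (otherwise \(R_b\) is undefined and your denominators vanish). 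You explicitly name this as ``the main obstacle'' and do not carry it out, so what you have is a correct reduction plus an acknowledged gap, not a proof.

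For calibration: the paper itself does not prove this lemma either; it is quoted from Dea{\~n}o and Segura (2007) and the paper's proof is a citation, so your attempt at a self-contained argument goes beyond the paper, but it must close the gap above to count. Two smaller repairs you would need along the way. First, your dismissal of the linearly dependent case (``\(R_b\) is constant and there is nothing to prove'') is backwards: if \(R_b\) were constant the lemma's strict inequalities would be \emph{false}, not vacuous; the correct observation is that dependence cannot occur, since \(g = c f\) forces \(-\psi = 1\), contradicting \(\psi > 0\). Second, since your intended conclusion (strict increase up to \(b^*\), strict decrease from \(b^*+1\) on) is strict, your sign analysis must also exclude \(D_b = 0\) at interior nodes \(b \neq b^*\), not merely determine the sign of \(D_b\) when it is nonzero.
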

\begin{proof}
See \citet{deano2007mc_transitory}.
\end{proof}

According to Lemma \ref{lemma:pseudominimal}, if \(u_{b}\) is negative and \(v_{b}\) changes sign at index \(b^{*}\), then our asymptotic minimal solution behaves as a dominant solution up to \(b^{*} - 1\) or \(b^{*}\).
We must then study the sign of the two coefficients
\begin{empheq}{align*}
  u_{b} &= - \frac{b (b - k) z}{(a + b + 1 - k) (a + b - k)}\\
  v_{b} &= -\frac{(a + b + 1 - k) + (a - b) z}{a + b + 1 - k}
\end{empheq}
with \(b \geq 1\), \(a \geq 1\) and \(k \leq 1\).
Since the denominators are strictly positive, we can simply study the signs of the associated quantities
\begin{empheq}{align*}
  u_{b}^{\prime} &= - b (b - k) z\\
  v_{b}^{\prime} &= - (a + b + 1 - k) - (a - b) z
\end{empheq}
\(u_{b}^{\prime}\) is negative when \(z > 0\), positive when \(z < 0\), and zero when \(b = k = 1\).
Define \(b^{*} = (z - 1)^{-1} ((z + 1) a + 1 - k)\).
\(v_{b}^{\prime}\) is negative when \(z < 1\) and \(b > b^{*}\) or when \(z > 1\) and \(b < b^{*}\).
It is obviously positive in the complementary set.
The point \(b^{*}\) is the delimiter at which the coefficient \(v_{b}\) switches from positive sign to negative sign or vice versa.

When \(z > 1\) we are under the conditions of Lemma \ref{lemma:pseudominimal}, therefore the solution is surely minimal for \(b > b^{*} + 1\).
It is pseudo-dominant for all \(b < b^{*}\).
Not knowing the shape of the linearly independent solution \(g_{b}\), we don't know if the solution becomes minimal at \(b^{*}\) or \(b^{*} + 1\).
Interestingly, when \(z < - (a + 2 - k) / (a - 1)\), we have the opposite behaviour of a positive \(u_{b}\) and \(v_{b}\) changing sign from positive to negative at the same index \(b^{*}\).
The regions are highlighted in Figure \ref{fig:coefficient_sign}.
\begin{figure}[t]
  \centering
  \includegraphics[width=\textwidth]{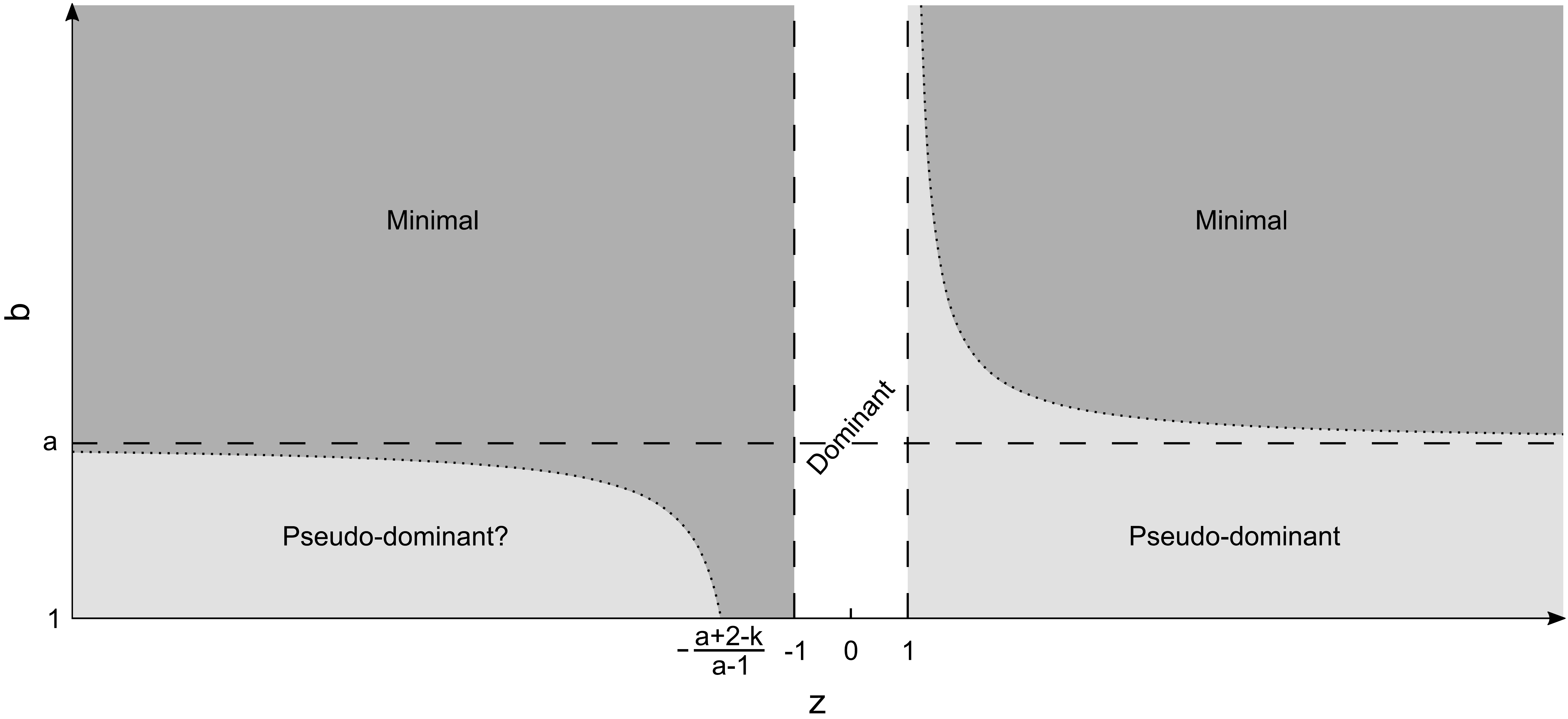}
  \caption{Nature of the hypergeometric function \({}_{2}F_{1}(-a, -b; -(a + b - k); -z)\) as a solution of TTRR (\ref{eq:recurrence}). The minimum value admissible for \(b\) is one. When \(b = 0\) we can simply use the numerically stable equations (\ref{eq:transprob_9})-(\ref{eq:transprob_12}). The dotted curve is given by equation \(b^{*} = (z - 1)^{-1} ((z + 1) a + 1 - k)\). It is dotted to represent the fact that we don't know if the solution becomes minimal at \(b^{*}\) or \(b^{*} + 1\). The curve has an horizontal asymptote at \(b = a\).}\label{fig:coefficient_sign}
\end{figure}
Lemma \ref{lemma:pseudominimal} does not consider the case of a positive \(u_{b}\) but we conjecture that it might be applied to this case as well.
Nevertheless, as shown by the following proposition, we can simply ignore the problem altogether.

\begin{proposition}\label{prop:z}
For all finite \(\lambda > 0\), \(\mu > 0\), and \(t > 0\), function \(z(t, \lambda, \mu)\) is always greater than -1.
It is positive when \(\mu \neq \lambda\) and \(t < \log(\lambda / \mu) / (\lambda - \mu)\) or when \(\mu = \lambda\) and \(t < \lambda^{-1}\).
\end{proposition}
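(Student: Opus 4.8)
The plan is to treat the two assertions separately, since the sign statement follows almost immediately from earlier results while the lower bound \(z > -1\) requires a short computation.

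For the sign of \(z\), I would first observe that under the standing assumptions \(t > 0\), \(\lambda > 0\), \(\mu > 0\), Proposition \ref{prop:phi_fun} gives \(\phi(t, \lambda, \mu) > 0\), so that \(\alpha(t, \lambda, \mu) = \mu \phi\) and \(\beta(t, \lambda, \mu) = \lambda \phi\) are both strictly positive. Consequently the denominator \(\alpha \beta\) in the definition \(z = \gamma / (\alpha \beta)\) is strictly positive, and the sign of \(z\) coincides with the sign of \(\gamma(t, \lambda, \mu)\). The positivity claim then reduces directly to Proposition \ref{prop:gamma_fun}, which states exactly that \(\gamma > 0\) precisely when \(t < \xi\), with \(\xi = \log(\lambda / \mu) / (\lambda - \mu)\) for \(\mu \neq \lambda\) and \(\xi = 1 / \lambda\) for \(\mu = \lambda\). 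No further work is needed for this half.

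For the lower bound I would show \(z + 1 > 0\). In the degenerate case \(\mu = \lambda\) this is immediate from the explicit form \(z(t, \lambda, \lambda) = (\lambda t)^{-2} - 1\), whose leading term is strictly positive. For \(\mu \neq \lambda\), I would set \(w = e^{(\lambda - \mu) t}\) (so that \(w > 0\) and \(w \neq 1\), since \(t > 0\)) and write \(z + 1\) over the common denominator \(\lambda \mu (w - 1)^2\), which is strictly positive. The task then becomes showing that the numerator \((\lambda - \mu w)(\lambda w - \mu) + \lambda \mu (w - 1)^2\) is positive.

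The key step, and the only place where anything could go wrong, is the algebraic simplification of this numerator. Expanding the two contributions, the \(w^2\) terms and the constant terms cancel in pairs, and the expression collapses to \(w(\lambda^2 - 2 \lambda \mu + \mu^2) = e^{(\lambda - \mu) t}(\lambda - \mu)^2\). Since this is strictly positive for \(\lambda \neq \mu\), we obtain \(z + 1 > 0\), that is \(z > -1\). I expect this cancellation to be the main obstacle only in the sense that it must be carried out accurately; once one sees that the numerator equals \(e^{(\lambda - \mu) t}(\lambda - \mu)^2\), the bound is immediate, and the same identity incidentally shows that the bound is sharp, being approached as \(\mu \to \lambda\) or \(t \to \infty\).
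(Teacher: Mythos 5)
Your proof is correct, and for the lower bound it takes a genuinely different --- and in fact more rigorous --- route than the paper. The sign statement is handled identically in both: positivity of \(\alpha \beta\) (via Proposition \ref{prop:phi_fun} / Corollary \ref{corol:nonneg}) reduces the sign of \(z\) to the sign of \(\gamma\), which is exactly Proposition \ref{prop:gamma_fun}. For \(z > -1\), however, the paper only rewrites \(z\) in exponential form and asserts that it converges to \(-1\) when \(\lambda\), \(\mu\), or \(t\) go to infinity, that the limit is never attained for finite parameters, and that the function blows up as parameters approach zero; as written this is an appeal to boundary behaviour, and knowing the limits on the boundary of the parameter region does not by itself rule out an interior dip below \(-1\). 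Your algebraic identity closes precisely this gap: with \(w = e^{(\lambda - \mu)t}\),
\begin{equation*}
  z + 1 = \frac{(\lambda - \mu w)(\lambda w - \mu) + \lambda \mu (w - 1)^{2}}{\lambda \mu (w - 1)^{2}} = \frac{(\lambda - \mu)^{2}\, w}{\lambda \mu (w - 1)^{2}} > 0
\end{equation*}
for \(\mu \neq \lambda\) (the expansion is right: the \(w^{2}\) and constant terms cancel, leaving \(w(\lambda^{2} - 2\lambda\mu + \mu^{2})\)), together with \(z + 1 = (\lambda t)^{-2} > 0\) when \(\mu = \lambda\). This is a complete one-line proof and yields, as a bonus, an explicit formula for the gap \(z + 1\), which the paper's argument never produces.

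One small correction to your closing remark: the bound \(-1\) is indeed approached as \(t \to \infty\) (or as \(\lambda\) or \(\mu \to \infty\)), but \emph{not} as \(\mu \to \lambda\) with \(t\) fixed; there both numerator and denominator of your expression vanish, and the limit is \(z + 1 \to (\lambda t)^{-2} > 0\), consistent with the equal-rates formula. This slip is incidental and does not affect the validity of your proof of the proposition.
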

\begin{proof}
Rewrite the function \(z(t, \lambda, \mu)\) as
\begin{equation*}
  z(t, \lambda, \mu) = \frac{\gamma(t, \lambda, \mu)}{\alpha(t, \lambda, \mu) \beta(t, \lambda, \mu)} = \frac{(\frac{\lambda}{\mu} + \frac{\mu}{\lambda}) e^{(\lambda - \mu) t} - e^{2 (\lambda - \mu) t} - 1}{e^{2 (\lambda - \mu) t} - 2 e^{(\lambda - \mu) t} + 1}
\end{equation*}
It is straightforward to show that the function converges to \(-1\) when \(\lambda\), \(\mu\), or \(t\) go to infinity.
The limit is never attained for finite \(\lambda\), \(\mu\), or \(t\).
When any of the parameters approaches zero, instead, the function approaches positive infinity.
We know from Corollary \ref{corol:nonneg} that the denominator \(\alpha(t, \lambda, \mu) \beta(t, \lambda, \mu)\) is always positive.
The sign of the function \(z(t, \lambda, \mu)\) is therefore equal to the sign of \(\gamma(t, \lambda, \mu)\), which is given in Proposition \ref{prop:gamma_fun}.
Same results apply when \(\mu = \lambda\).
\end{proof}

Note that for \(z > 1\), as clearly shown in Figure \ref{fig:coefficient_sign}, we have to use either the forward or backward recursion depending on the value of \(b\) that we wish to evaluate.
We can simplify our computations by applying the well known symmetric property \({}_{2}F_{1}(-a, -b; -(a + b - k); -z) = {}_{2}F_{1}(-b, -a; -(a + b - k); -z)\).
If \(b > a\), swap the two variables to transform a minimal solution into a pseudo-dominant one.
Using this trick we can apply the forward recursion for all \(z > -1\).

All the previous results are summarized in Algorithm \ref{alg:hyper_eval} in Appendix \ref{appendix:algorithm}.
Assuming a constant time for arithmetic operations the time complexity is simply \(O(m)\), where \(m = \min(a, b)\), that is the total number of iterations required.
Note that we only use basic arithmetic operations, saving computational time when compared to the more expensive functions found in equations (\ref{eq:transprob_2})-(\ref{eq:transprob_3}), such as the Binomial/Gamma.
Using the TTRR approach is better, from a computationally point of view, also when the problem is well-behaved.

\section{Likelihood function}\label{sec:likelihood}
Let \(\mathbf{t} = (t_{0}, \ldots, t_{S})^{T}\) be the vector of observation times with \(t_{S} \leq t\), \(\mathbf{n} = (n_{0}, \ldots, n_{S})^{T}\) be the corresponding observed population sizes, and \(\boldsymbol{\tau} = (\tau_{1}, \ldots, \tau_{S})^{T} = (t_{1} - t_{0}, \ldots, t_{S} - t_{S - 1})^{T}\) be the vector of inter-arrival times.
When the process is observed continuously the log-likelihood function is \citep[Equation (25)]{darwin1956b_behaviour}
\begin{equation}\label{eq:loglik_continuous}
  \log \mathcal{L}(\lambda, \mu | \mathbf{t}, \mathbf{n}) = B_{t} \log \lambda + D_{t} \log \mu - (\lambda + \mu) X_{t} + \sum_{s = 0}^{S - 1} \log n_{s}
\end{equation}
where \(B_{t}\) and \(D_{t}\) are respectively the total number of births and deaths recorded during the time interval \([0, t]\) while \(X_{t} = \sum_{s = 0}^{S} n_{s} \tau_{s + 1}\) is the total time lived in the population during \([0, t]\).
By convention we set \(\tau_{S + 1} = t - t_{S}\).
From (\ref{eq:loglik_continuous}) we obtain the maximum likelihood estimators (MLEs) of \(\lambda\) and \(\mu\) as
\begin{equation}\label{eq:mle_continuous}
  \hat{\lambda} = \frac{B_{t}}{X_{t}}, \qquad \hat{\mu} = \frac{D_{t}}{X_{t}}
\end{equation}
from which follows that the MLE of the growth rate \(\theta = \lambda - \mu\) is \(\hat{\theta} = \hat{\lambda} - \hat{\mu} = (B_{t} - D_{t}) / X_{t}\).
A more challenging situation is encountered when the BDP is observed discretely at fixed time points.
Rewrite the probability of transitioning from \(i\) to \(j\) in \(t\) time with birth rate \(\lambda\) and death rate \(\mu\) as \(p(j | i, t, \lambda, \mu)\).
Since the BDP is a continuous time Markov chain \citep{kendall1949jrsssbsm_stochastic} we can write the likelihood function as
\begin{equation*}
  \mathcal{L}(\lambda, \mu | \mathbf{t}, \mathbf{n}) = \prod_{s = 1}^{S} p(n_{s} | n_{s - 1}, \tau_{s}, \lambda, \mu)
\end{equation*}
Note that the joint likelihood of \(M\) observations of stochastically independent processes, having the same birth and death rates, is simply the product of the \(M\) likelihoods associated with each process.
To the best of our knowledge, no known closed form solutions for \(\hat{\lambda}\) and \(\hat{\mu}\) are currently available.
However, in the case of equidistant sampling where \(\tau_{s} = \tau\) for all \(s\), we know that \citep{keiding1975as_maximum}
\begin{equation}\label{eq:mle_theta}
  \hat{\theta} = \frac{1}{\tau}\log\left(\frac{n_{1} + \cdots + n_{S}}{n_{0} + \cdots + n_{S - 1}}\right)
\end{equation}
It is easy to show that the first moment of \(\hat{\theta}\) does not exist.
Starting with \(S = 1\) we have
\begin{equation*}
  \mathbb{E}[\hat{\theta}] = \frac{1}{\tau} \sum_{j = 0}^{\infty} \log\left(\frac{j}{n_{0}}\right) p(j | n_{0}, t, \lambda, \mu)
\end{equation*}
The first term in the summation is not defined because the probability of extinction is strictly positive, unless the process is a pure birth process (see equations (\ref{eq:transprob_9})-(\ref{eq:transprob_12})).
For a simple birth-and-death process without migration the population stays extinct once its size reaches a value of zero, therefore the previous result can be extended to any value \(S > 1\).
To estimate \(\hat{\lambda}\), \(\hat{\mu}\), and \(\hat{\theta}\) we must consider only observations in which the population is not immediately extinct at time point \(s = 1\).

To find the maximum likelihood estimators we will use a numerical approach, that is the Newton–Raphson method \citep[Chapter 4]{bonnans2006_numerical} applied to the log-likelihood function.
To proceed we need its gradient and Hessian matrix, that are
\begin{equation}\label{eq:gradient}
  \nabla l(\lambda, \mu | \mathbf{t}, \mathbf{n}) = \nabla \log \mathcal{L}(\lambda, \mu | \mathbf{t}, \mathbf{n}) = \sum_{s = 1}^{S}
  \begin{pmatrix}
    \dfrac{\partial}{\partial \lambda} \log p(n_{s} | n_{s - 1}, \tau_{s}, \lambda, \mu)\\[1em]
    \dfrac{\partial}{\partial \mu} \log p(n_{s} | n_{s - 1}, \tau_{s}, \lambda, \mu)
  \end{pmatrix}
\end{equation}
\begin{equation}\label{eq:hessian}
  \mathbf{H}(\lambda, \mu | \mathbf{t}, \mathbf{n}) = \sum_{s = 1}^{S}
  \begin{pmatrix}
    \dfrac{\partial^{2}}{\partial \lambda^{2}} \log p(n_{s} | n_{s - 1}, \tau_{s}, \lambda, \mu) & \dfrac{\partial^{2}}{\partial \lambda \partial \mu} \log p(n_{s} | n_{s - 1}, \tau_{s}, \lambda, \mu)\\[1em]
    \dfrac{\partial^{2}}{\partial \mu \partial \lambda} \log p(n_{s} | n_{s - 1}, \tau_{s}, \lambda, \mu) & \dfrac{\partial^{2}}{\partial \mu^{2}} \log p(n_{s} | n_{s - 1}, \tau_{s}, \lambda, \mu)
  \end{pmatrix}
\end{equation}
with closed form solutions of partial derivatives of log-probabilities appearing in (\ref{eq:gradient}) and (\ref{eq:hessian}) given in Appendix \ref{appendix:hessian}.
They can be evaluated with our proposed TTRR approach.
Note that (\ref{eq:gradient}) and (\ref{eq:hessian}) are sums of piecewise functions with sub-domains inherited from equations (\ref{eq:transprob_2})-(\ref{eq:transprob_8}).

\section{Applications}\label{sec:applications}
All results presented so far are implemented in a free Julia \citep{bezanson2017_sr_julia} package called ``\mbox{SimpleBirthDeathProcess}''.
The package is released under a MIT software license and can be downloaded from \url{https://github.com/albertopessia/SimpleBirthDeathProcess.jl}.

Returning to the example shown in Figure \ref{fig:relative_error}, we can see from Figure \ref{fig:relative_error_stable} that our method improves significantly the accuracy of the computations.
Interestingly, although not entirely unexpected, the algorithm has a higher numerical error in the neighbourhood of the special point \(\mu = \lambda\), that is the removable singularity of equation (\ref{eq:transprob_2}).
Note that relative errors for this particular example are always less than \(10^{-10}\) and small enough for any practical application.
In Figure \ref{fig:relative_error_contour} we can see a more general example where points near the line \(\mu = \lambda\) are again associated with higher relative errors.
Also in this case they are very small and always less than \(10^{-13}\).
\begin{figure}[p]
  \centering
  \includegraphics[width=\textwidth]{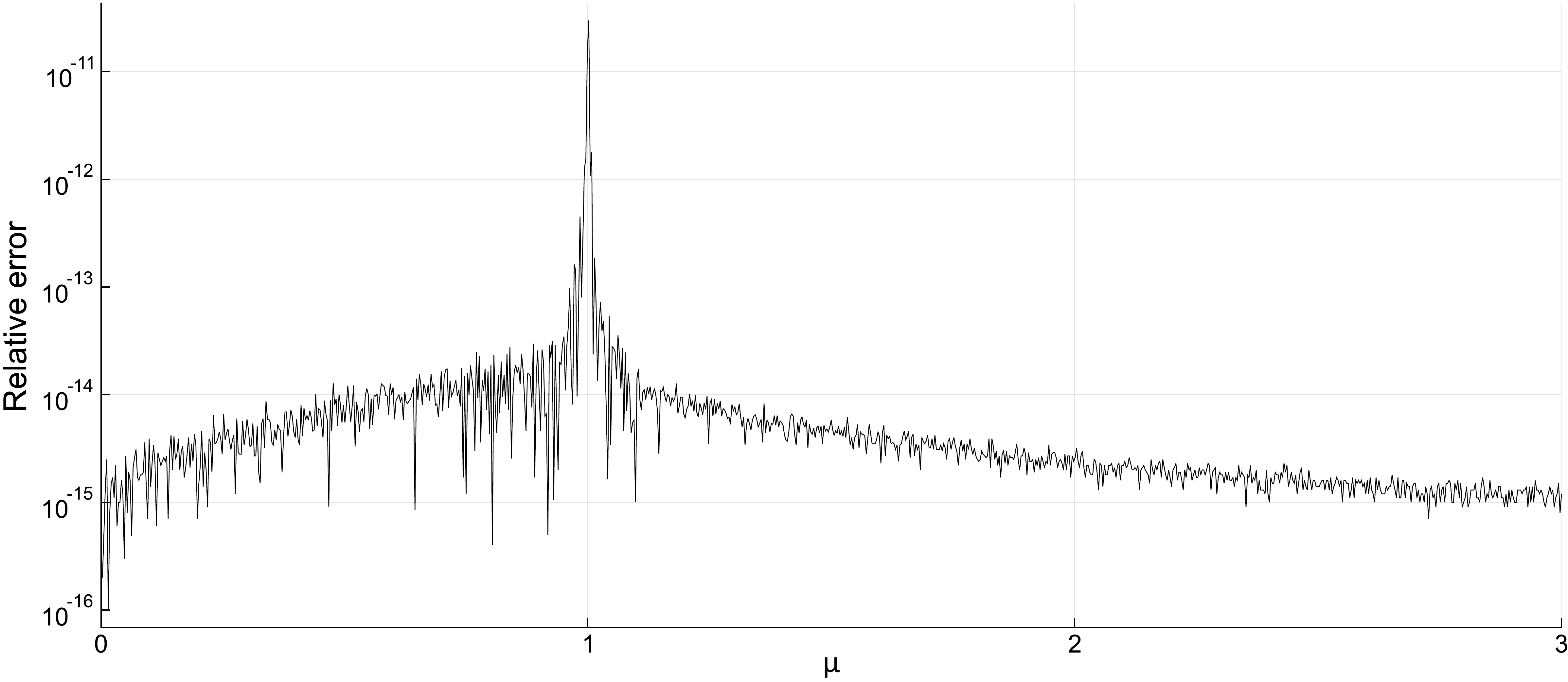}
  \caption{Numerical relative error of the log-probability evaluated using the hypergeometric representation and the TTRR approach. Parameters are the same as in Figure \ref{fig:relative_error}, that is \(i = 25\), \(j = 35\), \(t = 2\), and \(\lambda = 1\). Relative error is always less than \(10^{-10}\).}\label{fig:relative_error_stable}
\end{figure}
\begin{figure}[p]
  \centering
  \includegraphics[width=0.7\textwidth]{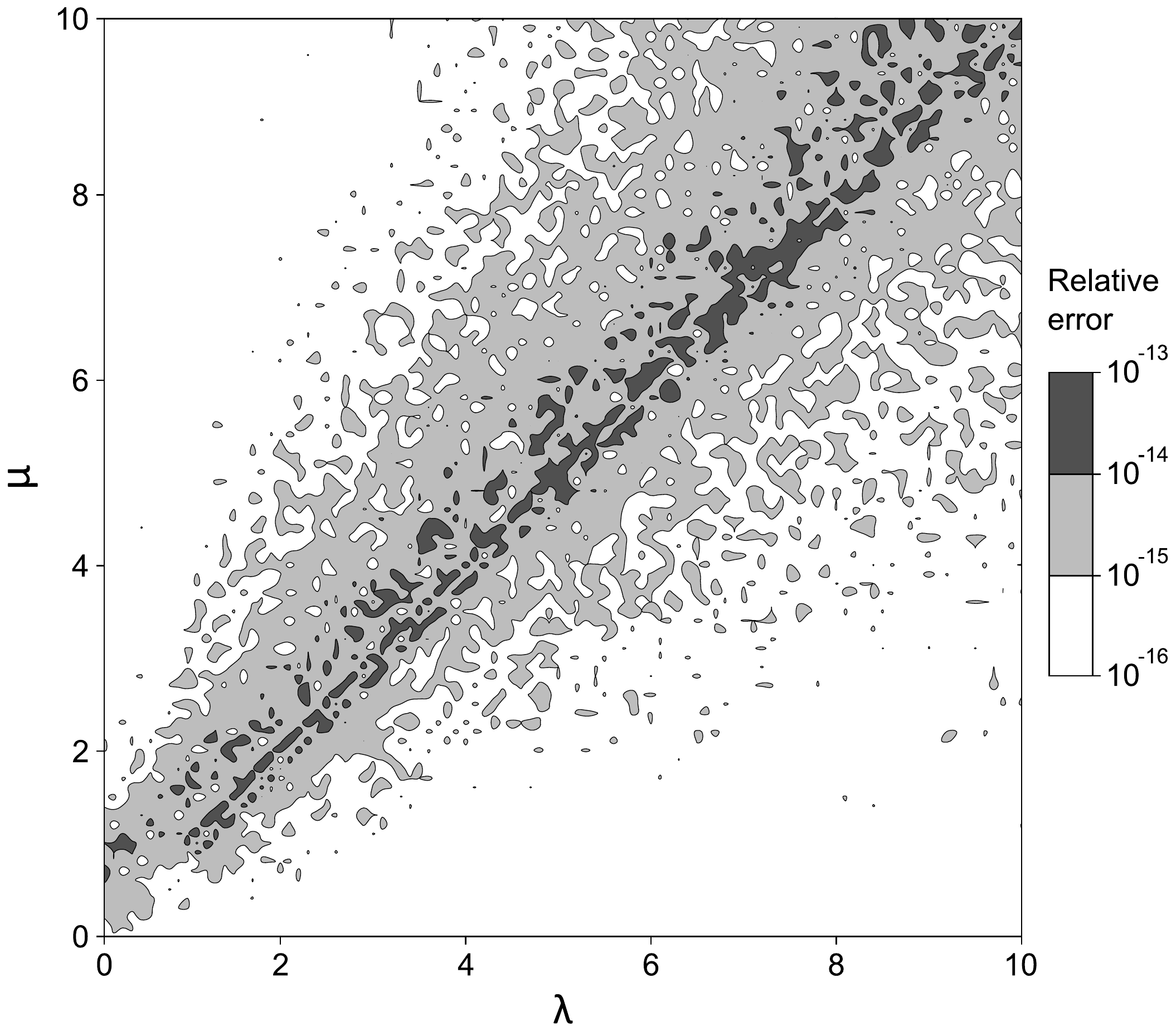}
  \caption{Numerical relative error of the log-probability evaluated using the hypergeometric representation and the TTRR approach. Parameters for this example are \(i = 200\), \(j = 100\), and \(t = 1\). Relative error is always less than \(10^{-13}\).}\label{fig:relative_error_contour}
\end{figure}

\subsection{Simulated data}
We will now study some properties of the maximum likelihood estimator of the birth rate \(\lambda\), death rate \(\mu\), and growth rate \(\theta = \lambda - \mu\).
We will use our software package to perform simulations and apply standard Monte Carlo integration to approximate the bias and root mean square error (RMSE) of MLEs.
The total number of simulations is set to \(10^{5}\) in each of the following synthetic experiments.

\subsubsection*{Constant growth rate}
The first study mimics a situation in which both rate parameters are strictly positive.
For simplicity we fix the total observation time to \(t = 10\) and assume the process to be observed at \(S\) equidistant time points, that is every \(\tau = t / S\) amount of time.
To reduce the amount of possible combinations to test we choose birth and death rates so that the expected population size and standard deviation at time \(t\) is approximately proportional to the initial population size.
In what follows we will always condition our estimators only to populations that are not immediately extinct, as explained in Section \ref{sec:likelihood}.

Results of the simulations when \(\lambda > \mu\) are shown in Table \ref{tab:monte_carlo_birth_greater_death} while results of the simulations when \(\lambda < \mu\) are shown in Table \ref{tab:monte_carlo_birth_less_death}.
\begin{table}[t]
  \centering\small
  \caption{\small Monte Carlo estimates from \(10^{5}\) simulations of a simple BDP where \(\lambda > \mu\). Growth rate \(\theta = \lambda - \mu = 0.0693\) for each row, i.e. the expected population size at time \(t = 10\) is set to be two times the initial population size \(n_{0}\). For each number of time points \(S\), the three rows correspond respectively to a standard deviation of 1.25, 1.5, and 2.0 times the initial population size \(n_{0}\).}
  \label{tab:monte_carlo_birth_greater_death}
  \begin{tabular}{@{} rr rrr rrr rr @{}}
    \toprule
    \multicolumn{2}{c}{} & \multicolumn{3}{c}{\(\lambda\)} & \multicolumn{3}{c}{\(\mu\)} & \multicolumn{2}{c}{\(\theta\)}\\
    \cmidrule{3-5} \cmidrule(l){6-8} \cmidrule(l){9-10}
    \(n_{0}\) & \(S\) & Truth & Bias & RMSE & Truth & Bias & RMSE & Bias & RMSE\\
    \midrule
        10 & 1 &  0.305 &  -0.244 &  0.249 &  0.236 &  -0.222 &  0.226 & -0.022 & 0.078\\
           &   &  0.425 &  -0.362 &  0.366 &  0.355 &  -0.335 &  0.339 & -0.027 & 0.092\\
           &   &  0.728 &  -0.659 &  0.662 &  0.658 &  -0.634 &  0.636 & -0.025 & 0.105\\
    \cmidrule{2-10}
           & 8 &  0.305 &  -0.045 &  0.141 &  0.236 &  -0.019 &  0.138 & -0.026 & 0.086\\
           &   &  0.425 &  -0.069 &  0.205 &  0.355 &  -0.028 &  0.203 & -0.041 & 0.123\\
           &   &  0.728 &  -0.132 &  0.391 &  0.658 &  -0.042 &  0.390 & -0.090 & 0.226\\
    \midrule
       100 & 1 &  2.742 &  -2.681 &  2.681 &  2.673 &  -2.658 &  2.658 & -0.023 & 0.083\\
           &   &  3.934 &  -3.872 &  3.872 &  3.864 &  -3.841 &  3.842 & -0.030 & 0.101\\
           &   &  6.966 &  -6.898 &  6.898 &  6.897 &  -6.867 &  6.867 & -0.031 & 0.118\\
    \cmidrule{2-10}
           & 8 &  2.742 &  -0.382 &  1.292 &  2.673 &  -0.357 &  1.285 & -0.026 & 0.086\\
           &   &  3.934 &  -0.565 &  1.853 &  3.864 &  -0.524 &  1.841 & -0.041 & 0.121\\
           &   &  6.966 &  -1.018 &  3.430 &  6.897 &  -0.928 &  3.413 & -0.089 & 0.226\\
    \midrule
      1000 & 1 & 27.111 & -27.050 & 27.050 & 27.041 & -27.026 & 27.026 & -0.024 & 0.085\\
           &   & 39.024 & -38.962 & 38.962 & 38.955 & -38.932 & 38.932 & -0.031 & 0.102\\
           &   & 69.349 & -69.281 & 69.281 & 69.280 & -69.249 & 69.249 & -0.032 & 0.121\\
    \cmidrule{2-10}
           & 8 & 27.111 &  -3.700 & 12.914 & 27.041 &  -3.675 & 12.906 & -0.025 & 0.085\\
           &   & 39.024 &  -5.522 & 18.489 & 38.955 &  -5.481 & 18.478 & -0.041 & 0.122\\
           &   & 69.349 &  -9.961 & 33.635 & 69.280 &  -9.871 & 33.616 & -0.090 & 0.226\\
    \bottomrule
  \end{tabular}
\end{table}
\begin{table}[t]
  \centering\small
  \caption{\small Monte Carlo estimates from \(10^{5}\) simulations of a simple BDP where \(\lambda < \mu\). Growth rate \(\theta = \lambda - \mu = -0.0693\) for each row, i.e. the expected population size at time \(t = 10\) is set to be half the initial population size \(n_{0}\). For each number of time points \(S\), the three rows correspond respectively to a standard deviation of 0.25, 0.5, and 1.0 times the initial population size \(n_{0}\).}
  \label{tab:monte_carlo_birth_less_death}
  \begin{tabular}{@{} rr rrr rrr rr @{}}
    \toprule
    \multicolumn{2}{c}{} & \multicolumn{3}{c}{\(\lambda\)} & \multicolumn{3}{c}{\(\mu\)} & \multicolumn{2}{c}{\(\theta\)}\\
    \cmidrule{3-5} \cmidrule(l){6-8} \cmidrule(l){9-10}
    \(n_{0}\) & \(S\) & Truth & Bias & RMSE & Truth & Bias & RMSE & Bias & RMSE\\
    \midrule
        10 & 1 &   0.052 &   -0.052 &   0.052 &   0.121 &   -0.039 &   0.067 & -0.012 & 0.057\\
           &   &   0.312 &   -0.306 &   0.306 &   0.381 &   -0.295 &   0.304 & -0.011 & 0.085\\
           &   &   1.352 &   -1.318 &   1.319 &   1.421 &   -1.370 &   1.372 &  0.051 & 0.116\\
    \cmidrule{2-10}
           & 8 &   0.052 &   -0.007 &   0.046 &   0.121 &    0.005 &   0.062 & -0.012 & 0.057\\
           &   &   0.312 &   -0.053 &   0.208 &   0.381 &    0.000 &   0.225 & -0.053 & 0.152\\
           &   &   1.352 &   -0.279 &   0.970 &   1.421 &   -0.035 &   0.905 & -0.245 & 0.476\\
    \midrule
       100 & 1 &   0.832 &   -0.831 &   0.831 &   0.901 &   -0.816 &   0.819 & -0.015 & 0.061\\
           &   &   3.431 &   -3.425 &   3.425 &   3.500 &   -3.396 &   3.398 & -0.029 & 0.112\\
           &   &  13.828 &  -13.796 &  13.797 &  13.898 &  -13.835 &  13.835 &  0.038 & 0.128\\
    \cmidrule{2-10}
           & 8 &   0.832 &   -0.116 &   0.419 &   0.901 &   -0.105 &   0.416 & -0.011 & 0.054\\
           &   &   3.431 &   -0.461 &   1.702 &   3.500 &   -0.409 &   1.695 & -0.052 & 0.146\\
           &   &  13.828 &   -1.821 &   8.481 &  13.898 &   -1.556 &   8.365 & -0.265 & 0.537\\
    \midrule
      1000 & 1 &   8.630 &   -8.629 &   8.629 &   8.699 &   -8.615 &   8.615 & -0.015 & 0.061\\
           &   &  34.623 &  -34.617 &  34.617 &  34.692 &  -34.585 &  34.586 & -0.032 & 0.119\\
           &   & 138.595 & -138.563 & 138.563 & 138.664 & -138.600 & 138.600 &  0.037 & 0.132\\
    \cmidrule{2-10}
           & 8 &   8.630 &   -1.107 &   4.163 &   8.699 &   -1.096 &   4.160 & -0.011 & 0.054\\
           &   &  34.623 &   -4.462 &  16.635 &  34.692 &   -4.411 &  16.627 & -0.052 & 0.145\\
           &   & 138.595 &  -16.466 &  83.001 & 138.664 &  -16.198 &  82.886 & -0.267 & 0.545\\
    \bottomrule
  \end{tabular}
\end{table}

Estimators are generally negatively biased but we also observe situations when \(\lambda < \mu\) in which the bias is positive.
The magnitude of the bias of \(\hat{\lambda}\) and \(\hat{\mu}\) is very large when only one time point is used, for which we have \(|\text{Bias}(\hat{\lambda})| \approx \text{RMSE}(\hat{\lambda})\) and \(|\text{Bias}(\hat{\mu})| \approx \text{RMSE}(\hat{\mu})\).
Increasing the number of time points \(S\) help reducing both the bias and RMSE of \(\hat{\lambda}\) and \(\hat{\mu}\).
All estimators obviously perform worse when the standard deviation of the stochastic process is high.
What is surprising to us is the observation that \(\hat{\theta}\) has approximately the same performance regardless of the initial sample size.
Increasing the number of time points has also the counter-intuitive effect of making the estimation worse.

\subsubsection*{Technical replicates}
Following the results from the previous section we want to investigate the performance of the estimators when the stochastic process is observed more than once.
As an example, this is a standard setting in dose-response drug screening experiments where cell counts are observed after a period of incubation and (usually) 3 to 5 technical replicates are produced under the same experimental conditions.
For a fair comparison we will use the same simulation parameters from the previous simulation experiment with the only difference of now having three technical replicates instead of one.
Results of the simulations when \(\lambda > \mu\) are shown in Table \ref{tab:monte_carlo_replicates_brg} while results of the simulations when \(\lambda < \mu\) are shown in Table \ref{tab:monte_carlo_replicates_drg}.
\begin{table}[t]
  \centering\small
  \caption{\small Monte Carlo estimates from \(10^{5}\) simulations of three simple BDPs where \(\lambda > \mu\). Growth rate \(\theta = \lambda - \mu = 0.0693\) for each row, i.e. the expected population size at time \(t = 10\) is set to be two times the initial population size \(n_{0}\). For each number of time points \(S\), the three rows correspond respectively to a standard deviation of 1.25, 1.5, and 2.0 times the initial population size \(n_{0}\).}
  \label{tab:monte_carlo_replicates_brg}
  \begin{tabular}{@{} rr rrr rrr rr @{}}
    \toprule
    \multicolumn{2}{c}{} & \multicolumn{3}{c}{\(\lambda\)} & \multicolumn{3}{c}{\(\mu\)} & \multicolumn{2}{c}{\(\theta\)}\\
    \cmidrule{3-5} \cmidrule(l){6-8} \cmidrule(l){9-10}
    \(n_{0}\) & \(k\) & Truth & Bias & RMSE & Truth & Bias & RMSE & Bias & RMSE\\
    \midrule
        10 & 1 &  0.305 &  -0.107 &  0.170 &  0.236 &  -0.102 &  0.164 & -0.006 & 0.038\\
           &   &  0.425 &  -0.185 &  0.242 &  0.355 &  -0.179 &  0.234 & -0.005 & 0.045\\
           &   &  0.728 &  -0.427 &  0.469 &  0.658 &  -0.430 &  0.466 &  0.003 & 0.054\\
    \cmidrule{2-10}
           & 8 &  0.305 &  -0.014 &  0.082 &  0.236 &  -0.006 &  0.080 & -0.008 & 0.039\\
           &   &  0.425 &  -0.020 &  0.117 &  0.355 &  -0.009 &  0.116 & -0.011 & 0.049\\
           &   &  0.728 &  -0.033 &  0.209 &  0.658 &  -0.011 &  0.208 & -0.022 & 0.075\\
    \midrule
       100 & 1 &  2.742 &  -1.064 &  1.712 &  2.673 &  -1.057 &  1.707 & -0.006 & 0.039\\
           &   &  3.934 &  -1.793 &  2.390 &  3.864 &  -1.787 &  2.382 & -0.006 & 0.046\\
           &   &  6.966 &  -4.201 &  4.610 &  6.897 &  -4.202 &  4.606 &  0.001 & 0.054\\
    \cmidrule{2-10}
           & 8 &  2.742 &  -0.126 &  0.765 &  2.673 &  -0.119 &  0.763 & -0.007 & 0.039\\
           &   &  3.934 &  -0.176 &  1.099 &  3.864 &  -0.165 &  1.097 & -0.011 & 0.049\\
           &   &  6.966 &  -0.270 &  1.986 &  6.897 &  -0.248 &  1.984 & -0.021 & 0.075\\
    \midrule
      1000 & 1 & 27.111 & -10.559 & 17.127 & 27.041 & -10.553 & 17.122 & -0.006 & 0.039\\
           &   & 39.024 & -17.871 & 23.869 & 38.955 & -17.865 & 23.860 & -0.006 & 0.046\\
           &   & 69.349 & -41.816 & 45.913 & 69.280 & -41.818 & 45.909 &  0.001 & 0.055\\
    \cmidrule{2-10}
           & 8 & 27.111 &  -1.275 &  7.621 & 27.041 &  -1.268 &  7.620 & -0.007 & 0.039\\
           &   & 39.024 &  -1.673 & 10.999 & 38.955 &  -1.662 & 10.997 & -0.011 & 0.049\\
           &   & 69.349 &  -2.647 & 19.663 & 69.280 &  -2.625 & 19.661 & -0.021 & 0.075\\
    \bottomrule
  \end{tabular}
\end{table}
\begin{table}[t]
  \centering\small
  \caption{\small \small Monte Carlo estimates from \(10^{5}\) simulations of three simple BDPs where \(\lambda < \mu\). Growth rate \(\theta = \lambda - \mu = -0.0693\) for each row, i.e. the expected population size at time \(t = 10\) is set to be half the initial population size \(n_{0}\). For each number of time points \(S\), the three rows correspond respectively to a standard deviation of 0.25, 0.5, and 1.0 times the initial population size \(n_{0}\).}
  \label{tab:monte_carlo_replicates_drg}
  \begin{tabular}{@{} rr rrr rrr rr @{}}
    \toprule
    \multicolumn{2}{c}{} & \multicolumn{3}{c}{\(\lambda\)} & \multicolumn{3}{c}{\(\mu\)} & \multicolumn{2}{c}{\(\theta\)}\\
    \cmidrule{3-5} \cmidrule(l){6-8} \cmidrule(l){9-10}
    \(n_{0}\) & \(k\) & Truth & Bias & RMSE & Truth & Bias & RMSE & Bias & RMSE\\
    \midrule
        10 & 1 &   0.052 &   -0.024 &   0.050 &   0.121 &   -0.021 &   0.052 & -0.003 & 0.030\\
           &   &   0.312 &   -0.215 &   0.237 &   0.381 &   -0.224 &   0.240 &  0.009 & 0.051\\
           &   &   1.352 &   -1.198 &   1.209 &   1.421 &   -1.269 &   1.277 &  0.071 & 0.106\\
    \cmidrule{2-10}
           & 8 &   0.052 &   -0.002 &   0.027 &   0.121 &    0.001 &   0.033 & -0.004 & 0.030\\
           &   &   0.312 &   -0.016 &   0.111 &   0.381 &   -0.001 &   0.113 & -0.015 & 0.066\\
           &   &   1.352 &   -0.090 &   0.468 &   1.421 &   -0.022 &   0.454 & -0.068 & 0.178\\
    \midrule
       100 & 1 &   0.832 &   -0.305 &   0.588 &   0.901 &   -0.301 &   0.584 & -0.004 & 0.030\\
           &   &   3.431 &   -2.115 &   2.339 &   3.500 &   -2.117 &   2.333 &  0.002 & 0.055\\
           &   &  13.828 &  -12.336 &  12.447 &  13.898 &  -12.399 &  12.507 &  0.063 & 0.108\\
    \cmidrule{2-10}
           & 8 &   0.832 &   -0.038 &   0.246 &   0.901 &   -0.035 &   0.246 & -0.004 & 0.029\\
           &   &   3.431 &   -0.136 &   0.986 &   3.500 &   -0.121 &   0.985 & -0.015 & 0.065\\
           &   &  13.828 &   -0.498 &   4.210 &  13.898 &   -0.427 &   4.196 & -0.071 & 0.185\\
    \midrule
      1000 & 1 &   8.630 &   -3.055 &   5.873 &   8.699 &   -3.050 &   5.870 & -0.004 & 0.030\\
           &   &  34.623 &  -20.741 &  22.973 &  34.692 &  -20.742 &  22.967 &  0.001 & 0.054\\
           &   & 138.595 & -115.124 & 116.248 & 138.664 & -115.200 & 116.320 &  0.076 & 0.076\\
    \cmidrule{2-10}
           & 8 &   8.630 &   -0.371 &   2.462 &   8.699 &   -0.367 &   2.461 & -0.004 & 0.029\\
           &   &  34.623 &   -1.310 &   9.754 &  34.692 &   -1.295 &   9.753 & -0.016 & 0.066\\
           &   & 138.595 &   -4.562 &  41.566 & 138.664 &   -4.490 &  41.555 & -0.071 & 0.185\\
    \bottomrule
  \end{tabular}
\end{table}

As expected, we see a decrease in both bias magnitude and RMSE for \(\hat{\lambda}\) and \(\hat{\mu}\).
A small improvement is obtained also for \(\hat{\theta}\).
Again, increasing the number of time points allow for a better estimation of \(\lambda\) and \(\mu\) but make the estimation of \(\theta\) worse.
When increasing the number of time points \(S\), the loss (gain) of performance is lower (higher) that in the single observation case of the previous section.

\subsubsection*{Real data}
As an example application we will use real data from a cancer drug combination experiment originally performed and analysed by \citet{liu2007sm_analysis}.
Briefly, two monoclonal antibodies were combined together at a concentration ratio of 1:1 to form a mixture.
Tested concentrations of the mixture were 0 (no antibody), 0.025, 0.25, 2.5, and 10 \textmugreek g/ml.
Living cell counts were subsequently measured with a fluorescence microscopy at 1, 2, and 3 days.
For each time point they performed six technical replicates for concentrations greater than zero and twelve replicates for the control dose of zero.
Since the initial number of cells was not available, they estimated it from the data to be on average approximately equal to 23.
Following previous studies \citep{crawford_2014_jasa_estimation} we will fix for each and every observation an initial cell count of 23 as if it were known in advance.
The complete dataset is visually represented in Figure \ref{fig:liu_dataset}.
\begin{figure}[t]
  \centering
  \includegraphics[width=\textwidth]{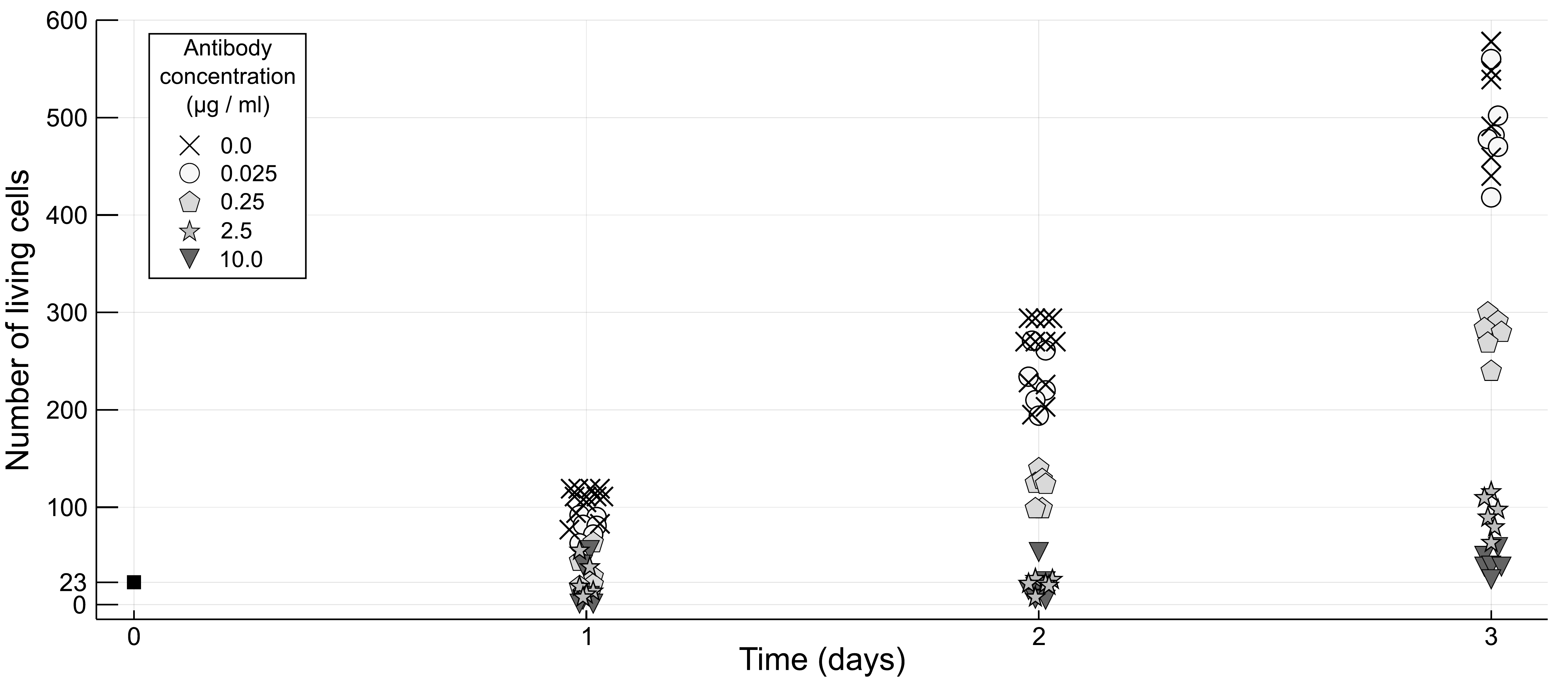}
  \caption{Antibody dataset by \citet{liu2007sm_analysis}. All the observed counts are assumed to be originated from the same number of cells \(n_{0} = 23\). Increasing the antibody concentration reduces the growth rate of cancer cells.}\label{fig:liu_dataset}
\end{figure}
It is important to note that the dataset is made of 108 independent observations, i.e. counts referring to the same concentration at different time points are not part of the same time series but are, instead, independent realizations of the same stochastic process observed at different times.
In our notation, \(S = 1\) and \(\tau = t / S = t\) for each of the 108 measurements.
The basic datum \(\mathbf{x}_{i}\), \(i = 1, \ldots, 108\), is a vector \((c_{i}, t_{i}, n_{i}(0), n_{i}(t_{i}))^{T}\) where \(c_{i}\) is the tested antibody concentration, \(t_{i}\) is the time in days, \(n_{i}(0)\) is the initial population size set to 23 for each and every observation, and \(n_{i}(t_{i})\) is the final cancer cell counts for observation \(i\).
For further details about the study and the experimental design we refer to the original article of \citet{liu2007sm_analysis}.
To model the data we use a similar approach to that of \citet{crawford_2014_jasa_estimation}, that is a linear model on the logarithm scale of the basic process rates.
Formally we define
\begin{equation}\label{eq:glm_model}
  \left\{
  \begin{aligned}
    \log(\lambda_{i}) &= \alpha_{\lambda} + \beta_{\lambda} \log(1 + c_{i})\\
    \log(\mu_{i}) &= \alpha_{\mu} + \beta_{\mu} \log(1 + c_{i})
  \end{aligned}
  \right.,\;
  \text{ for all } i = 1, \ldots, 108
\end{equation}
Maximum likelihood estimates and their corresponding standard errors (SE) are shown in Table \ref{tab:mle_glm_model}.
We obtained estimates by numerically maximizing the log-likelihood function with the BFGS algorithm \citep[Chapter 4]{bonnans2006_numerical}.
We applied the delta method to the observed Fisher information matrix in order to compute the standard error of all parameters.
\begin{table}
  \centering
  \caption{Maximum likelihood estimates of model (\ref{eq:glm_model}) based on the antibody dataset.}
  \label{tab:mle_glm_model}
  \begin{tabular}{@{} l rr rr rr @{}}
    \toprule
    & \multicolumn{2}{c}{\(\lambda\)} & \multicolumn{2}{c}{\(\mu\)} & \multicolumn{2}{c}{\(\theta = \lambda - \mu\)}\\
    \cmidrule(l){2-3} \cmidrule(l){4-5} \cmidrule(l){6-7}
     Dose (\textmugreek g/ml) & Estimate & SE & Estimate & SE & Estimate & SE\\
    \midrule
        0 & 4.0344 & 0.4844 & 2.9572 & 0.4835 &  1.0772 & 0.0292\\
    0.025 & 4.0238 & 0.4806 & 2.9595 & 0.4806 &  1.0644 & 0.0285\\
     0.25 & 3.9397 & 0.4548 & 2.9774 & 0.4595 &  0.9623 & 0.0283\\
      2.5 & 3.5304 & 0.4476 & 3.0721 & 0.4366 &  0.4583 & 0.0535\\
       10 & 3.1249 & 0.5778 & 3.1810 & 0.5962 & -0.0561 & 0.0740\\
    \bottomrule
  \end{tabular}
\end{table}

According to our model, increasing the antibody concentration has the double effect of reducing the birth rate and raising the death rate while maintaining the overall rate \(\lambda + \mu\) approximately the same.
When the dose of the treatment increases the global growth rate \(\theta\) decreases as a consequence, reaching a negative value at the maximum tested concentration.
Interestingly, \citeauthor{crawford_2014_jasa_estimation} obtained values that are slightly different from ours but still very close.
In particular, the maximum absolute difference between our estimates of \(\theta\) and theirs is just 0.054.
Since their R package \textit{birth.death} is not available for download any more we could not replicate the analysis and investigate the discrepancies more.
We believe, however, that the observed differences are simply due to numerical errors or to a chosen solution that is a local optimum rather than a global.

\section{Concluding remarks}\label{sec:conclusions}
Maximum likelihood estimators for the basic rates of a simple (linear) birth-and-process are available in closed form only when the process is observed continuously over time.
Numerical methods are currently the only option to draw inferences for discretely observed processes.
However, we showed that direct application of the well-known transition probability might be subject to large numerical error.
We rewrote the probability in terms of a Gaussian hypergeometric function and found a three-term recurrence relation for its evaluation.
Not only our approach led to very accurate approximations but also to a computational efficient algorithm when compared to the na{\"i}ve direct summation method.

By means of simulation we observed that MLEs \(\hat{\lambda}\) and \(\hat{\mu}\) are largely negatively biased.
We confirmed the intuition that to obtain better estimates it is important to employ a large initial population size, multiple time points, and multiple technical replicates.
The actual values, as one would expect, depend on the magnitude of the basic rates, i.e. the process standard deviation.
If only the growth parameter \(\theta = \lambda - \mu\) is of interest then multiple technical replicates with (surprisingly) only one time point provide the best results.
Interestingly, the initial population size seems not to affect the bias nor the root mean square error of \(\hat{\theta}\).

We also released a free Julia package called ``SimpleBirthDeathProcess''.
With the help of our tool it is possible to simulate, fit, or just evaluate the likelihood function of a simple BDP.
Accurate evaluation of the log-likelihood function will create opportunities for future research, such as implementation of MCMC algorithms for Bayesian inference.
As a final note, it might be worth investigating our conjecture that Lemma \ref{lemma:pseudominimal} can be extended to TTRRs with a positive coefficient.

\section*{Acknowledgements}\label{sec:acknowledgements}
We thank Prof Hao Liu for sharing with us the antibody dataset we analysed in this article.
We also thank Dr Gerry DeNardo and Dr Evan Tobin for having performed the experiment and collected the data.
We thank Francesco Iafrate for a valuable discussion about the proof of Theorem \ref{theorem:minimal_dominant}.

Research was supported by the European Research Council (ERC) starting grant, No 716063 (Drug-Comb), and the Academy of Finland Research Fellow grant, No 317680.

\bibliographystyle{abbrvnat}
\bibliography{lib/biblio}

\clearpage

\appendix

\section{Algorithm for evaluating \texorpdfstring{\({}_{2}F_{1}(-a, -b; -(a + b - k); -z)\)}{2F1(-a, -b; -(a + b - k); -z)}}\label{appendix:algorithm}
\renewcommand{\thealgorithm}{A.\arabic{algorithm}}
\begin{algorithm}
  \caption{Hypergeometric evaluation}
  \label{alg:hyper_eval}
  \begin{algorithmic}[1]
    \Input{Integers \(a \geq 0\), \(b \geq 0\), \(k \leq 1\). Real \(z > -1\).}
    \Output{\({}_{2}F_{1}(-a, -b; -(a + b - k); -z)\)}
    \Initialization{\(m \gets \min(a, b)\)\\\(M \gets \max(a, b)\)}
    \If{\(z = 0\) or \(m = 0\)}
      \State \Return 1
    \EndIf
    \State \(y \gets 1 + \dfrac{M z}{M + 1 - k}\)
    \If{m = 1}
      \State \Return y
    \EndIf
    \Statex // To avoid overflow define \(R_{b} = y_{b} / y_{b - 1}\), that is \(y_{b} = R_{b} y_{b - 1}\). Note that \(R_{1} = y_{1} / y_{0} = y_{1}\).
    \State \(R \gets y\)
    \For{\(n = 2, \ldots, m\)}
      \State \(R \gets 1 + \dfrac{z}{M + n - k} \left(M - n + 1 + \dfrac{(n - 1) (n - 1 - k)}{(M + n - k - 1) R}\right)\)
      \State \(y \gets R y\)
    \EndFor
    \State \Return \(y\)
  \end{algorithmic}
\end{algorithm}

\clearpage

\section{Gradient and Hessian of the log-transition probability}\label{appendix:hessian}
Partial derivatives of the log-transition probability are simple but cumbersome.
To simplify notation we will drop function arguments (unless required for clarity) and denote the first and second order partial derivatives of a function \(f(x, y)\) with \(f_{x}\), \(f_{y}\), \(f_{xx}\), \(f_{xy}\), \(f_{yx}\), and \(f_{yy}\).
We will also use the substitutions
\begin{align*}
  c^{(h)} = \binom{i}{h} \binom{i + j - h - 1}{i - 1}&, & \theta^{(h)} = \mu^{i - h} \lambda^{j - h} \phi(t, \lambda, \mu)^{i + j - 2h} \gamma(t, \lambda, \mu)^{h}&, & x &= e^{(\lambda - \mu) t}\\
  u = \frac{\F{-(i - 1), -(j - 1)}{-(i + j)}{-z(t, \lambda, \mu)}}{\F{-i, -j}{-(i + j - 1)}{-z(t, \lambda, \mu)}}&, & v = \frac{\F{-(i - 2), -(j - 2)}{-(i + j + 1)}{-z(t, \lambda, \mu)}}{\F{-i, -j}{-(i + j - 1)}{-z(t, \lambda, \mu)}}& & &
\end{align*}
Partial derivatives of the log-transition probability, in their most general form, are simply
\begin{align*}
  (\log p)_{\lambda} &= \dfrac{\sum_{h} c^{(h)} \theta_{\lambda}^{(h)}}{\sum_{k} c^{(k)} \theta^{(k)}} & (\log p)_{\mu} &= \dfrac{\sum_{h} c^{(h)} \theta_{\mu}^{(h)}}{\sum_{k} c^{(k)} \theta^{(k)}}\\
  (\log p)_{\lambda \lambda} &= \dfrac{\sum_{h} c^{(h)} \theta_{\lambda \lambda}^{(h)}}{\sum_{k} c^{(k)} \theta^{(k)}} - (\log p)_{\lambda}^{2} & (\log p)_{\lambda \mu} &= \dfrac{\sum_{h} c^{(h)} \theta_{\lambda \mu}^{(h)}}{\sum_{k} c^{(k)} \theta^{(k)}} - (\log p)_{\lambda} (\log p)_{\mu}\\
  (\log p)_{\mu \lambda} &= \dfrac{\sum_{h} c^{(h)} \theta_{\mu \lambda}^{(h)}}{\sum_{k} c^{(k)} \theta^{(k)}} - (\log p)_{\mu} (\log p)_{\lambda} & (\log p)_{\mu \mu} &= \dfrac{\sum_{h} c^{(h)} \theta_{\mu \mu}^{(h)}}{\sum_{k} c^{(k)} \theta^{(k)}} - (\log p)_{\mu}^{2}
\end{align*}
We will now list all partial derivatives of basic functions to be used later in the Section.
Partial derivatives of function \(\log\phi(t, \lambda, \mu)\) are
\begin{align*}
  (\log \phi)_{\lambda} &= - \frac{x}{\lambda x - \mu} \left(1 - \frac{(\lambda - \mu) t}{x - 1}\right), \qquad \qquad (\log \phi)_{\mu} = \frac{1}{\lambda x - \mu} \left(1 - \frac{(\lambda - \mu) t x}{x - 1}\right)\\
  (\log \phi)_{\lambda \lambda} &= \frac{(x + \mu t) x}{(\lambda x - \mu)^{2}} \left(1 - \frac{(\lambda - \mu) t}{x - 1}\right) + \frac{t x}{(\lambda x - \mu) (x - 1)} \left(1 - \frac{(\lambda - \mu) t x}{x - 1}\right)\\
  (\log \phi)_{\mu \mu} &= \frac{1 + \lambda t x}{(\lambda x - \mu)^{2}} \left(1 - \frac{(\lambda - \mu) t x}{x - 1}\right) + \frac{t x}{(\lambda x - \mu) (x - 1)} \left(1 - \frac{(\lambda - \mu) t}{x - 1}\right)\\
  (\log \phi)_{\lambda \mu} &= - \frac{(1 + \mu t) x}{(\lambda x - \mu)^{2}} \left(1 - \frac{(\lambda - \mu) t}{x - 1}\right) - \frac{t x}{(\lambda x - \mu) (x - 1)} \left(1 - \frac{(\lambda - \mu) t x}{x - 1}\right)\\
  (\log \phi)_{\mu \lambda} &= (\log \phi)_{\lambda \mu}
\end{align*}
Partial derivatives of function \(z(t, \lambda, \mu)\) are
\begin{align*}
  z_{\lambda} &= \frac{(\lambda - \mu) x}{\lambda \mu (x - 1)^{2}} \left(\frac{\lambda + \mu}{\lambda} - \frac{(\lambda - \mu) t (x + 1)}{x - 1} \right)\\
  z_{\mu} &= -\frac{(\lambda - \mu) x}{\lambda \mu (x - 1)^{2}} \left(\frac{\lambda + \mu}{\mu} - \frac{(\lambda - \mu) t (x + 1)}{x - 1} \right)\\
  z_{\lambda \lambda} &= \frac{x}{\lambda \mu (x - 1)^{2}} \left(2 \left(\frac{\mu}{\lambda}\right)^{2} - \frac{(\lambda - \mu) t}{x - 1} \left(2 \left(\frac{\lambda + \mu}{\lambda}\right) (x + 1) - \frac{(\lambda - \mu) t}{x - 1} (x^{2} + 4 x + 1)\right) \right)\\
  z_{\mu \mu} &= \frac{x}{\lambda \mu (x - 1)^{2}} \left(2 \left(\frac{\lambda}{\mu}\right)^{2} - \frac{(\lambda - \mu) t}{x - 1} \left(2 \left(\frac{\lambda + \mu}{\mu}\right) (x + 1) - \frac{(\lambda - \mu) t}{x - 1} (x^{2} + 4 x + 1)\right) \right)\\
  z_{\lambda \mu} &= -\frac{x}{\lambda \mu (x - 1)^{2}} \left(\frac{\lambda^2 + \mu^2}{\lambda \mu} - \frac{(\lambda - \mu) t}{x - 1} \left(\frac{(\lambda + \mu)^{2}}{\lambda \mu} (x + 1) - \frac{(\lambda - \mu) t}{x - 1} (x^{2} + 4 x + 1)\right) \right)\\
  z_{\mu \lambda} &= z_{\lambda \mu}
\end{align*}
Partial derivatives of function \(\log({}_{2}F_{1}(-i, -j; -(i + j - 1); -z(t, \lambda, \mu)))\) are
\begin{align*}
  (\log {}_{2}F_{1})_{\lambda} &= \frac{i j u}{i + j - 1} z_{\lambda}, \qquad \qquad (\log {}_{2}F_{1})_{\mu} = \frac{i j u}{i + j - 1} z_{\mu}\\
  (\log {}_{2}F_{1})_{\lambda \lambda} &= \frac{i j u}{i + j - 1} \left(z_{\lambda \lambda} + z_{\lambda}^{2} \left(\frac{(i - 1) (j - 1)}{i + j - 2} \frac{v}{u} - \frac{i j u}{i + j - 1}\right)\right)\\
  (\log {}_{2}F_{1})_{\mu \mu} &= \frac{i j u}{i + j - 1} \left(z_{\mu \mu} + z_{\mu}^{2} \left(\frac{(i - 1) (j - 1)}{i + j - 2} \frac{v}{u} - \frac{i j u}{i + j - 1}\right)\right)\\
  (\log {}_{2}F_{1})_{\lambda \mu} &= \frac{i j u}{i + j - 1} \left(z_{\lambda \mu} + z_{\lambda} z_{\mu} \left(\frac{(i - 1) (j - 1)}{i + j - 2} \frac{v}{u} - \frac{i j u}{i + j - 1}\right)\right)\\
  (\log {}_{2}F_{1})_{\mu \lambda} &= (\log {}_{2}F_{1})_{\lambda \mu}
\end{align*}
We can now study the shape of the partial derivatives of the log-transition probability in the various sub-domains.
Considering that the binomial coefficient \(\binom{a}{b}\) is equal to zero for all \(b > a\), we will use the convention that \(\binom{a}{b} / \binom{a}{b}\) is always equal to 1 for all \(a\) and \(b\).

\subsection*{Parameters greater than zero}
When \(t\), \(\lambda\), and \(\mu\) are all greater than zero we can safely use representation (\ref{eq:transprob_hyper}).
We need to distinguish the case \(\mu \neq \lambda\) from the case \(\mu = \lambda\).

\subsubsection*{Unequal rates}
If \(\mu \neq \lambda\) the partial derivatives are simply
\begin{align}
  (\log p)_{\lambda} &= \frac{j}{\lambda} + (i + j) (\log \phi)_{\lambda} + (\log {}_{2}F_{1})_{\lambda} \label{eq:gradient_lambda_unequal}\\
  (\log p)_{\mu} &= \frac{i}{\mu} + (i + j) (\log \phi)_{\mu} + (\log {}_{2}F_{1})_{\mu}\label{eq:gradient_mu_unequal}\\
  (\log p)_{\lambda \lambda} &= - \frac{j}{\lambda^{2}} + (i + j) (\log \phi)_{\lambda \lambda} + (\log {}_{2}F_{1})_{\lambda \lambda}\label{eq:hessian_lambda_lambda_unequal}\\
  (\log p)_{\mu \mu} &= - \frac{i}{\mu^{2}} + (i + j) (\log \phi)_{\mu \mu} + (\log {}_{2}F_{1})_{\mu \mu}\label{eq:hessian_mu_mu_unequal}\\
  (\log p)_{\lambda \mu} &= (i + j) (\log \phi)_{\lambda \mu} + (\log {}_{2}F_{1})_{\lambda \mu}\label{eq:hessian_lambda_mu_unequal}\\
  (\log p)_{\mu \lambda} &= (\log p)_{\lambda \mu}\label{eq:hessian_mu_lambda_unequal}
\end{align}

\subsubsection*{Equal rates}
Apply the limit \(\mu \rightarrow \lambda\) directly to equations (\ref{eq:gradient_lambda_unequal})-(\ref{eq:hessian_mu_lambda_unequal}) to get
\begin{align}
  \left. (\log p)_{\lambda} \right|_{\mu = \lambda} &= \frac{j}{\lambda} + (i + j) \left. (\log \phi)_{\lambda} \right|_{\mu = \lambda} + \left. (\log {}_{2}F_{1})_{\lambda} \right|_{\mu = \lambda} \label{eq:gradient_lambda_equal}\\
  \left. (\log p)_{\mu} \right|_{\mu = \lambda} &= \frac{i}{\lambda} + (i + j) \left. (\log \phi)_{\mu} \right|_{\mu = \lambda} + \left. (\log {}_{2}F_{1})_{\mu} \right|_{\mu = \lambda}\label{eq:gradient_mu_equal}\\
  \left. (\log p)_{\lambda \lambda} \right|_{\mu = \lambda} &= - \frac{j}{\lambda^{2}} + (i + j) \left. (\log \phi)_{\lambda \lambda} \right|_{\mu = \lambda} + \left. (\log {}_{2}F_{1})_{\lambda \lambda} \right|_{\mu = \lambda}\label{eq:hessian_lambda_lambda_equal}\\
  \left. (\log p)_{\mu \mu} \right|_{\mu = \lambda} &= - \frac{i}{\lambda^{2}} + (i + j) \left. (\log \phi)_{\mu \mu} \right|_{\mu = \lambda} + \left. (\log {}_{2}F_{1})_{\mu \mu} \right|_{\mu = \lambda}\label{eq:hessian_mu_mu_equal}\\
  \left. (\log p)_{\lambda \mu} \right|_{\mu = \lambda} &= (i + j) \left. (\log \phi)_{\lambda \mu} \right|_{\mu = \lambda} + \left. (\log {}_{2}F_{1})_{\lambda \mu} \right|_{\mu = \lambda}\label{eq:hessian_lambda_mu_equal}\\
  \left. (\log p)_{\mu \lambda} \right|_{\mu = \lambda} &= \left. (\log p)_{\lambda \mu} \right|_{\mu = \lambda}\label{eq:hessian_mu_lambda_equal}
\end{align}
where
\begin{align*}
  \left. (\log \phi)_{\lambda} \right|_{\mu = \lambda} &= \left. (\log \phi)_{\mu} \right|_{\mu = \lambda} = - \frac{t}{2 (1 + \lambda t)}\\
  \left. (\log \phi)_{\lambda \lambda} \right|_{\mu = \lambda} &= \left. (\log \phi)_{\mu \mu} \right|_{\mu = \lambda} = \frac{(1 - 2 \lambda t) t^2}{12 (1 + \lambda t)^{2}}\\
  \left. (\log \phi)_{\lambda \mu} \right|_{\mu = \lambda} &= \left. (\log \phi)_{\mu \lambda} \right|_{\mu = \lambda} = \frac{(5 + 2 \lambda t) t^{2}}{12 (1 + \lambda t)^{2}}\\
  \left. (\log {}_{2}F_{1})_{\lambda} \right|_{\mu = \lambda} &= \left. (\log {}_{2}F_{1})_{\mu} \right|_{\mu = \lambda} = - \frac{i j u}{(i + j - 1) \lambda^{3} t^{2}}\\
  \left. (\log {}_{2}F_{1})_{\lambda \lambda} \right|_{\mu = \lambda} &= \frac{i j}{(i + j - 1) \lambda^{4} t^{2}} \left(\frac{(12 - \lambda^{2} t^{2}) u}{6} + \frac{(i - 1) (j - 1) v}{(i + j - 2) \lambda^{2} t^{2}} - \frac{i j u^{2}}{(i + j - 1) \lambda^{2} t^{2}}\right)\\
  \left. (\log {}_{2}F_{1})_{\mu \mu} \right|_{\mu = \lambda} &= \left. (\log {}_{2}F_{1})_{\lambda \lambda} \right|_{\mu = \lambda}\\
  \left. (\log {}_{2}F_{1})_{\lambda \mu} \right|_{\mu = \lambda} &= \frac{i j}{(i + j - 1) \lambda^{4} t^{2}} \left(\frac{(6 + \lambda^{2} t^{2}) u}{6} + \frac{(i - 1) (j - 1) v}{(i + j - 2) \lambda^{2} t^{2}} - \frac{i j u^{2}}{(i + j - 1) \lambda^{2} t^{2}}\right)\\
  \left. (\log {}_{2}F_{1})_{\mu \lambda} \right|_{\mu = \lambda} &= \left. (\log {}_{2}F_{1})_{\lambda \mu} \right|_{\mu = \lambda}
\end{align*}
Note that functions \(u\) and \(v\) must be evaluated at the point \(z(t, \lambda, \lambda) = (\lambda t)^{-2} - 1\).

\subsection*{Parameters equal to zero}
When any of \(t\), \(\lambda\), or \(\mu\) equal zero it is easier to compute the partial derivatives starting from the standard representation (\ref{eq:transprob_2}) instead of (\ref{eq:transprob_hyper}).
However, derivatives of \(\theta^{(h)}\) are long and complicated, especially the second-order partial derivatives.
Considering that intermediate results are not of interest we won't write them here.
Instead, we will only provide the required final solutions.

\subsubsection*{Observation time is zero}
When \(t = 0\) the partial derivatives are always zero regardless of the values of \(i, j, \lambda\), or \(\mu\).
This is a consequence of the fact that the transition probability, equations (\ref{eq:transprob_6}) and (\ref{eq:transprob_8}), does not depend on the process rates.

\subsubsection*{Death rate is zero}
When \(\mu\) approaches zero also the partial derivatives of \(\theta^{(h)}\), in general, approach zero.
Only exceptions are \(\theta_{\lambda}^{(i)}\), \(\theta_{\mu}^{(i)}\), \(\theta_{\mu}^{(i - 1)}\), \(\theta_{\lambda \lambda}^{(i)}\), \(\theta_{\mu \mu}^{(i)}\),\(\theta_{\mu \mu}^{(i - 1)}\), \(\theta_{\mu \mu}^{(i - 2)}\), \(\theta_{\lambda \mu}^{(i)}\), \(\theta_{\lambda \mu}^{(i - 1)}\), \(\theta_{\mu \lambda}^{(i)}\), and \(\theta_{\mu \lambda}^{(i - 1)}\).
Partial derivatives become
\begin{align}
  \left. (\log p)_{\lambda} \right|_{\mu = 0} &= - \left(\frac{i x - j}{x - 1}\right) t\label{eq:gradient_lambda_zero_mu}\\
  \left. (\log p)_{\mu} \right|_{\mu = 0} &= \left(\frac{i x - j}{x - 1}\right) t - \frac{i (i - 1) x + j (j + 1) x^{-1} - 2 i j}{(i - j - 1) \lambda}\label{eq:gradient_mu_zero_mu}\\
  \left. (\log p)_{\lambda \lambda} \right|_{\mu = 0} &= \frac{(i - j) t^{2} x}{(x - 1)^{2}}\label{eq:hessian_lambda_lambda_zero_mu}\\
  \left. (\log p)_{\mu \mu} \right|_{\mu = 0} &= \frac{(i - j) t^{2} x}{(x - 1)^{2}} + \frac{i (i - 1) j (j + 1) (x - 1)^{4}}{(i - j - 1)^{2} (i - j - 2) \lambda^{2} x^{2}} + \nonumber\\
  &- \frac{i (i - 1) (x - 2 \lambda t) x + j (j + 1) (x^{-1} + 2 \lambda t) x^{-1} - 2 i j}{(i - j - 1) \lambda^{2}}\label{eq:hessian_mu_mu_zero_mu}\\
  \left. (\log p)_{\lambda \mu} \right|_{\mu = 0} &= - \frac{(i - j) t^{2} x}{(x - 1)^{2}} + \frac{i (i - 1) (1 - \lambda t) x + j (j + 1) (1 + \lambda t) x^{-1} - 2 i j}{(i - j - 1)^{2}\lambda^{2}}\label{eq:hessian_lambda_mu_zero_mu}\\
  \left. (\log p)_{\mu \lambda} \right|_{\mu = 0} &= \left. (\log p)_{\lambda \mu} \right|_{\mu = 0}\label{eq:hessian_mu_lambda_zero_mu}
\end{align}
where \(x = e^{\lambda t}\).
Note that equation (\ref{eq:gradient_mu_zero_mu}) has a discontinuity at the value \(j = i - 1\) where
\begin{equation*}
  \lim_{j \rightarrow (i - 1)^{-}} \left. (\log p)_{\mu} \right|_{\mu = 0} = - \infty, \qquad
  \lim_{j \rightarrow (i - 1)^{+}} \left. (\log p)_{\mu} \right|_{\mu = 0} = \infty
\end{equation*}
Equation (\ref{eq:hessian_mu_mu_zero_mu}) has a discontinuity at the value \(j = i - 2\) where
\begin{equation*}
  \lim_{j \rightarrow (i - 2)^{-}} \left. (\log p)_{\mu \mu} \right|_{\mu = 0} = \infty, \qquad
  \lim_{j \rightarrow (i - 2)^{+}} \left. (\log p)_{\mu \mu} \right|_{\mu = 0} = - \infty
\end{equation*}
Equations (\ref{eq:hessian_lambda_mu_zero_mu}) and (\ref{eq:hessian_mu_lambda_zero_mu}) have a discontinuity at the value \(j = i - 1\) where
\begin{equation*}
  \lim_{j \rightarrow (i - 1)^{-}} \left. (\log p)_{\lambda \mu} \right|_{\mu = 0} = - \infty, \qquad
  \lim_{j \rightarrow (i - 1)^{+}} \left. (\log p)_{\lambda \mu} \right|_{\mu = 0} = \infty
\end{equation*}

\subsubsection*{Birth rate is zero}
When \(\lambda\) approaches zero also the partial derivatives of \(\theta^{(h)}\), in general, approach zero.
Only exceptions are \(\theta_{\lambda}^{(j)}\), \(\theta_{\lambda}^{(j - 1)}\), \(\theta_{\mu}^{(j)}\), \(\theta_{\lambda \lambda}^{(j)}\), \(\theta_{\lambda \lambda}^{(j - 1)}\), \(\theta_{\lambda \lambda}^{(j - 2)}\), \(\theta_{\mu \mu}^{(j)}\), \(\theta_{\lambda \mu}^{(j)}\), \(\theta_{\lambda \mu}^{(j - 1)}\), \(\theta_{\mu \lambda}^{(j)}\), and \(\theta_{\mu \lambda}^{(j - 1)}\).
Partial derivatives become
\begin{align}
  \left. (\log p)_{\lambda} \right|_{\lambda = 0} &= \left(\frac{j x - i}{x - 1}\right) t - \frac{j (j - 1) x + i (i + 1) x^{-1} - 2 i j}{(j - i - 1) \mu}\label{eq:gradient_lambda_zero_lambda}\\
  \left. (\log p)_{\mu} \right|_{\lambda = 0} &= - \left(\frac{j x - i}{x - 1}\right) t\label{eq:gradient_mu_zero_lambda}\\
  \left. (\log p)_{\lambda \lambda} \right|_{\lambda = 0} &= \frac{(j - i) t^{2} x}{(x - 1)^{2}} + \frac{j (j - 1) i (i + 1) (x - 1)^{4}}{(j - i - 1)^{2} (j - i - 2) \mu^{2} x^{2}} + \nonumber\\
  &- \frac{j (j - 1) (x - 2 \mu t) x + i (i + 1) (x^{-1} + 2 \mu t) x^{-1} - 2 i j}{(j - i - 1) \mu^{2}} \label{eq:hessian_lambda_lambda_zero_lambda}\\
  \left. (\log p)_{\mu \mu} \right|_{\lambda = 0} &= \frac{(j - i) t^{2} x}{(x - 1)^{2}}\label{eq:hessian_mu_mu_zero_lambda}\\
  \left. (\log p)_{\lambda \mu} \right|_{\lambda = 0} &= - \frac{(j - i) t^{2} x}{(x - 1)^{2}} + \frac{j (j - 1) (1 - \mu t) x + i (i + 1) (1 + \mu t) x^{-1} - 2 i j}{(j - i - 1)^{2}\mu^{2}}\label{eq:hessian_lambda_mu_zero_lambda}\\
  \left. (\log p)_{\mu \lambda} \right|_{\lambda = 0} &= \left. (\log p)_{\lambda \mu} \right|_{\mu = 0}\label{eq:hessian_mu_lambda_zero_lambda}
\end{align}
where \(x = e^{\mu t}\).
Note that equation (\ref{eq:gradient_lambda_zero_lambda}) has a discontinuity at the value \(j = i + 1\) where
\begin{equation*}
  \lim_{j \rightarrow (i + 1)^{-}} \left. (\log p)_{\lambda} \right|_{\lambda = 0} = \infty, \qquad
  \lim_{j \rightarrow (i + 1)^{+}} \left. (\log p)_{\lambda} \right|_{\lambda = 0} = - \infty
\end{equation*}
Equation (\ref{eq:hessian_lambda_lambda_zero_lambda}) has a discontinuity at the value \(j = i + 2\) where
\begin{equation*}
  \lim_{j \rightarrow (i + 2)^{-}} \left. (\log p)_{\mu \mu} \right|_{\lambda = 0} = - \infty, \qquad
  \lim_{j \rightarrow (i + 2)^{+}} \left. (\log p)_{\mu \mu} \right|_{\lambda = 0} = \infty
\end{equation*}
Equations (\ref{eq:hessian_lambda_mu_zero_lambda}) and (\ref{eq:hessian_mu_lambda_zero_lambda}) have a discontinuity at the value \(j = i + 1\) where
\begin{equation*}
  \lim_{j \rightarrow (i + 1)^{-}} \left. (\log p)_{\lambda \mu} \right|_{\lambda = 0} = \infty, \qquad
  \lim_{j \rightarrow (i + 1)^{+}} \left. (\log p)_{\lambda \mu} \right|_{\lambda = 0} = - \infty
\end{equation*}

\subsubsection*{Both rates are zero}
The gradient of the log-transition probability at the origin is only defined when \(j = i\).
To prove it, we will compute the limit \((\lambda, \mu) \rightarrow (0, 0)\) from different directions and observe whether they all converge to the same value or not.
If \(j \neq i\)
\begin{equation*}
  \lim_{\lambda \rightarrow 0} \left. (\log p)_{\lambda} \right|_{\mu = 0} = \sign(j - i) \infty, \quad \lim_{\mu \rightarrow 0} \left. (\log p)_{\lambda} \right|_{\lambda = 0} = - \frac{(i + j) t}{2}, \quad \lim_{\lambda \rightarrow 0} \left. (\log p)_{\lambda} \right|_{\mu = \lambda} = 0
\end{equation*}
and
\begin{equation*}
  \lim_{\lambda \rightarrow 0} \left. (\log p)_{\mu} \right|_{\mu = 0} = - \frac{(i + j) t}{2}, \quad \lim_{\mu \rightarrow 0} \left. (\log p)_{\mu} \right|_{\lambda = 0} = \sign(i - j) \infty, \quad \lim_{\lambda \rightarrow 0} \left. (\log p)_{\mu} \right|_{\mu = \lambda} = 0
\end{equation*}
The same phenomenon can be observed with the second-order partial derivatives.
When \(j = i\) the first-order partial derivatives converge to \(- i t\).
Second-order partial derivatives \((\log p)_{\lambda \lambda}\) and \((\log p)_{\mu \mu}\) converge to 0 while \((\log p)_{\lambda \mu}\) and \((\log p)_{\mu \lambda}\) converge to \(i^{2} t^{2}\).
If we interpret the transition probability as the likelihood of a single time point observation, these results are intuitive.
Indeed, if \(j \neq i\) the rates cannot be both equal to zero.
If \(j = i\), instead, the hypothesis \(\lambda = \mu = 0\) is plausible because it is compatible with the observation.

\end{document}